\newtheorem{theorem}{Theorem}[section]
\newtheorem{definition}[theorem]{Definition}
\newtheorem{proposition}[theorem]{Proposition}
\newtheorem{lemma}[theorem]{Lemma}
\newtheorem{remark}{Remark}[section]
\numberwithin{equation}{section}
\begin{document}
\title[ ]{On the non-homogeneous Navier-Stokes system with Navier friction boundary conditions}
\author[L. C. F. Ferreira]{Lucas C. F. Ferreira}
\author[G. Planas]{Gabriela Planas}
\address[Lucas C. F. Ferreira and Gabriela Planas]{Departamento de Matem{\'a}tica,
Instituto de Matem\'atica, Estat\'\i stica e Computa\c{c}\~ao Cient\'\i fica,
Universidade Estadual de Campinas, Rua Sergio Buarque de Holanda, 651,
13083-859, Campinas-SP, Brazil}
\email[L.C.F. Ferreira and G. Planas]{lcff@ime.unicamp.br and gplanas@ime.unicamp.br}
\author[E. J. Villamizar-Roa]{Elder J. Villamizar-Roa }
\address[Elder J. Villamizar-Roa]{ Universidad Industrial de Santander, Escuela de
Matem\'{a}ticas, A.A. 678, Bucaramanga, Colombia}
\email{jvillami@uis.edu.co}
\thanks{L.C.F. Ferreira was supported by FAPESP and CNPq - Brazil; G. Planas has been
partially supported by FAPESP - Brazil, grant 2007/51490-7 and CNPq - Brazil,
grant 303302/2009-7}
\date{\today}
\keywords{Non-homogeneous Navier-Stokes equations, Navier boundary conditions, inviscid limit.}
\subjclass[2010]{35Q30; 76D03; 35D30}

\begin{abstract}
We address the issue of existence of weak solutions for the non-homogeneous
Navier-Stokes system with Navier friction boundary conditions allowing the
presence of vacuum zones and assuming rough conditions on the data. We also
study the convergence, as the viscosity goes to zero, of weak solutions for
the non-homogeneous Navier-Stokes system with Navier friction boundary
conditions to the strong solution of the Euler equations with variable
density, provided that the initial data converge in $L^{2}$ to a smooth enough limit.

\end{abstract}
\maketitle

\section{Introduction}

We are concerned with the incompressible Navier-Stokes model with variable
density in a bounded domain. The governing equations are given by the
following system
\begin{equation}
\left\{
\begin{array}
[c]{rcl}%
\partial_{t}(\rho u)+\mbox{div}\ (\rho uu)-\nu\Delta u+\nabla\pi=\rho
f\ \mbox{in}\ Q, &  & \\
\mbox{div}\ u=0\ \mbox{in}\ Q, &  & \\
\partial_{t}\rho+\mbox{div}\ (\rho u)=0\ \mbox{in}\ Q. &  &
\end{array}
\right.  \label{prev0}%
\end{equation}
Here, $Q\equiv\Omega\times(0,T),$ where $\Omega$ is a bounded domain of
$\mathbb{R}^{3}$ with smooth boundary $\partial\Omega,$ and $T>0$. The
unknowns are the velocity field $u$, the density $\rho$, and the pressure
$\pi$ of the fluid. The parameter $\nu>0$ is the viscosity coefficient of the
fluid and $f$ is a given vector field driving the motion.

We supplement the system \eqref{prev0} with initial and Navier friction
boundary conditions
\begin{equation}
\left\{
\begin{array}
[c]{rcl}%
u\cdot n=0\ \mbox{on}\ \Sigma, &  & \\
\left[  D(u)n+\alpha u\right]  _{tan}=0\ \mbox{on}\ \Sigma, &  & \\
\rho(0)=\rho_{0}\ \mbox{in}\ \Omega, &  & \\
(\rho u)(0)=v_{0}\ \mbox{in}\ \Omega, &  &
\end{array}
\right.  \label{prev0b}%
\end{equation}
where $\Sigma\equiv\partial\Omega\times(0,T),$ $n$ is the exterior normal
vector to $\partial\Omega$, $\rho_{0}\geq0$ denotes the initial density and
$v_{0}$ has to be at least such that $v_{0}(x)=0$ whenever $\rho_{0}(x)=0.$
Moreover, $D(u)=\frac{1}{2}(\partial_{i}u_{j}+\partial_{j}u_{i})_{1\leq
i,j\leq n}$ denotes the deformation tensor, $[\cdot]_{tan}$ is the tangential
component of a vector on $\partial\Omega,$ and $\rho uu=\rho(u\otimes u)$. The
constant $\alpha\geq0$ stands for the friction coefficient which measures the
tendency of the fluid to slip on the boundary.

The goal of this paper is to study the convergence of solutions for
(\ref{prev0})-(\ref{prev0b}), as the viscosity goes to zero, toward the solution
of the Euler equations with variable density. Formally, when we drop the
viscous term (i.e., taking $\nu=0$) system \eqref{prev0}-\eqref{prev0b}
degenerates into the non-homogeneous Euler equations
\begin{equation}
\left\{
\begin{array}
[c]{rcl}%
\partial_{t}(\rho u)+\mbox{div}\ (\rho uu)+\nabla\pi=\rho f\  &
\mbox{in}\ Q, & \\
\mbox{div}\ u=0\  & \mbox{in}\ Q, & \\
\partial_{t}\rho+\mbox{div}\ (\rho u)=0\  & \mbox{in}\ Q, & \\
u\cdot n=0\  & \mbox{on}\ \Sigma, & \\
\rho(0)=\rho_{0}\  & \mbox{in}\ \Omega, & \\
(\rho u)(0)=v_{0}\  & \mbox{in}\ \Omega. &
\end{array}
\right.  \label{euler}%
\end{equation}
We aim at giving a justification of this formal procedure.

The issue of the vanishing viscosity limit or inviscid limit for the
incompressible homogeneous Navier-Stokes equations is a classical problem in
fluid mechanics. In the whole space and periodic cases, the inviscid limit was
performed by several authors, see e.g. \cite{ConstantinFoias,Constantin,
Kato,Yudovich}. In the case where there exist physical boundaries, the problem
of convergence leads to the formation of a boundary layer if one supplements
the Navier-Stokes equations with no-slip boundary conditions (which are the
most often prescribed ones). This happens because there is a discrepancy
between the no-slip boundary conditions for the Navier-Stokes equations and
the tangential boundary conditions for the Euler equations.

There is no consensus on the boundary conditions to be prescribed for the
Navier-Stokes equations, except for impermeable boundary which corresponds to
the condition \eqref{prev0b}$_{1}$. Navier \cite{Navier} claimed that the
tangential component of the viscous stress at the boundary should be
proportional to the tangential velocity, leading to the boundary condition
\eqref{prev0b}$_{2}$. Conditions \eqref{prev0b}$_{1}$-\eqref{prev0b}$_{2}$ are
called Navier friction boundary conditions, or simply Navier boundary
conditions (another names have been used as well). These conditions were also
derived by Maxwell \cite{Maxwell} from the kinetic theory of gases and
rigorously justified as a homogenization of the no-slip condition on a rough
boundary (see \cite{Jager}).

Recently, the inviscid limit for the Navier-Stokes equations with Navier
boundary conditions was established, for which the reader is referred to
\cite{Clopeau,Iftimie-Planas,Lopes,Masmoudi}. The situation in this case is
thus very different from the case of no-slip boundary conditions and requires
distinct involved arguments.

Concerning the non-homogeneous incompressible Navier-Stokes equations, it is
worthwhile to remark that there exists a considerable number of papers devoted
to their mathematical analysis, especially in the case where the equations are
complemented with Dirichlet boundary conditions. Those results can be
classified in two classes: on the one hand, there are existence results when
the initial density is assumed to be positive and so there is no vacuum
initially; and on the other hand, the case where the initial-vacuum is
allowed. The first case has been addressed by several authors, see e.g.
\cite{Antokazhi, Danchin1, kazhi, Lady, Okamoto, salvi, BRV,itoh-tani}, and
references therein. In order to avoid vacuum, the basic assumption in the
above-quoted works is
\[
0<c_{0}\leq\inf_{x\in\Omega}\rho_{0}(x)\leq\rho_{0}(x)\leq\sup_{x\in\Omega
}\rho_{0}(x),
\]
and so, in particular $\rho_{0},\rho$ have a positive lower bound. In the
second one, when the initial-vacuum is allowed $(\rho_{0}\geq0)$, the problem
\eqref{prev0}-\eqref{prev0b} is more difficult to handling. Indeed, comparing
with the first case, fewer results are available in the literature related to
the existence of weak solutions (see \cite{Simon,Simonb,Kim,lions}). In
particular, to the best of our knowledge, existence of weak solutions has not
been still treated for the non-homogeneous incompressible Navier-Stokes system
with Navier boundary conditions. Let us mention the work \cite{itoh2000} where
strong solutions to the system with slip boundary conditions are considered,
however vacuum zones are not admitted. Thus, our first goal will be show the
existence of weak solutions for \eqref{prev0}-\eqref{prev0b}, allowing vacuum
and assuming rough conditions on the field $f$ and initial data $\rho_{0}$ and
$v_{0}.$

The vanishing viscosity limit for the non-homogeneous incompressible
Navier-Stokes equations in the whole space, or with periodic boundary
conditions, was addressed in \cite{Itoh,itoh-tani,Danchin}, as long as no
vacuum states occur. They proved the convergence of local strong solutions in
Hilbert and Sobolev spaces. One of the difficulties in this case is to show
that the time existence is independent of the viscosity. As well as in the
homogeneous case, it is expected that the issue of the inviscid limit in
bounded domains presents boundary-layer phenomenon when one considers no-slip
boundary condition.

Our second goal in this paper is to show that a weak solution of the
non-homogenous incompressible Navier-Stokes equations \eqref{prev0} with
Navier boundary conditions \eqref{prev0b} converges in the energy space toward
the strong solution of the non-homogeneous incompressible Euler equations
\eqref{euler} in the inviscid limit. This extends some earlier results
obtained by \cite{Clopeau,Lopes,Iftimie-Planas,Paddick} in the homogeneous
case. The strategy is to compare the smooth solution of the Euler equations
and a weak solution of the Navier-Stokes equations, which is the leading idea
of the proof of weak-strong uniqueness for the homogeneous incompressible
Navier-Stokes equations. In our case, this approach arises suitably in view of
the fact that only strong solutions are known to exist for the non-homogeneous
Euler equations (\ref{euler}) (see e.g. \cite{valli88}). In fact, to best of
our knowledge, there is no theory of weak solutions for (\ref{euler}).

This paper is structured as follows. In the next section, we establish a
result of existence of weak solutions with finite energy for the problem
\eqref{prev0}-\eqref{prev0b} (see Theorem \ref{teorem1}). The precise
definition of weak solution with finite energy is given in Definition
\ref{definicion1} below.
In Section \ref{inviscid}, we address the vanishing viscosity limit for the
non-homogeneous Navier-Stokes equations with Navier friction boundary
conditions (see Theorem \ref{limitinv}). To this end, we first recall a result
of local strong solution for the Euler equations with variable density (see
Theorem \ref{euler1}). Finally, we proceed with the proof of the inviscid limit.

We finish this section by establishing some notations used throughout this
manuscript. We denote by $\mathcal{D}(\Omega)$ and $\mathcal{D}^{\prime
}(\Omega)$ the space of functions of class $C^{\infty}(\Omega)$ with compact
support, and the space of distributions on $\Omega,$ respectively. We use
standard notations for Lebesgue and Sobolev spaces. We denote by $\Vert
\cdot\Vert_{p}$ the norm in $L^{p}(\Omega)$. Otherwise, the norm will be
specified. For a Banach space $X$, we indicate by $\langle\cdot,\cdot
\rangle_{X^{\prime},X}$ the duality product between $X^{\prime}$ (the dual
space of $X$) and $X$. As usual, we will use the same notation for vector
valued and scalar valued spaces. There will be no danger of confusion since
the difference will be clear in the context.
Also, we denote by $H_{\sigma}^{1}(\Omega)$ the subspace of $H^{1}(\Omega)$ of
divergence free vector fields tangent to the boundary. Finally, $C_{w}%
([0,T];X)$ represents the space of functions $u:[0,T]\rightarrow X$ which are
continuous with respect to the weak topology.


\section{Weak solutions for the non-homogeneous Navier-Stokes system}

In this section we study the existence of global weak solutions for
the non-homogeneous incompressible Navier-Stokes with Navier
boundary conditions. We assume that the initial density $\rho_{0}$
belongs to $L^{p}(\Omega),$ $6\leq p\leq\infty,$ allowing to vanish,
$v_{0}\in L^{\frac{2p}{p+1}}(\Omega)$ with
$\frac{|v_{0}|^{2}}{\rho_{0}}\in L^{1}(\Omega)$ and the external
force $f\in L^{1}(0,T;L^{\frac{2p}{p-1}}(\Omega)).$ The data $v_{0}$ and $\frac{|v_{0}|^{2}}{\rho_{0}}$ correspond formally to initial value for $\rho u$ and $\rho\left\vert u\right\vert ^{2},$
respectively. In the case $p=\infty,$ $L^{r}$-exponents depending on $p$
should be understood in the natural way, that is, as the limit when
$p\rightarrow\infty$. For instance, $L^{\frac{2p}{p+1}}$ and $L^{\frac
{2p}{p-1}}$ become $L^{2}$ in that case.

Now we introduce the definition of weak solution with finite energy for the
system \eqref{prev0}-\eqref{prev0b}.

\begin{definition}
\label{definicion1}

A weak solution for \eqref{prev0}-\eqref{prev0b} is a pair of functions
$(u,\rho)$ verifying the following items:

\begin{enumerate}
\item[i)] $u\in L^{2}(0,T;H^{1}_{\sigma}(\Omega)),$ $\rho\in C([0,T];W^{-1,p}
(\Omega))\cap L^\infty(0,T;L^p(\Omega)),$ $\rho\geq0$ a.e. in $Q,$
${\rho u}\in L^{\infty}(0,T;L^{\frac{2p}{p+1}}(\Omega)),$
$\sqrt{\rho} u\in L^{\infty}(0,T;L^{2 }(\Omega)),$ $\rho
uu\in L^1(0,T;L^2(\Omega)),$ such that equation
$\partial_{t}\rho+\mbox{div}\ (\rho u)=0$ is satisfied in
$\mathcal{D}^{\prime}(Q)$ and the momentum equation
(\ref{prev0})$_{1}$ is verified in the following sense:
\begin{align}
-\int_{0}^{T}\int_{\Omega}\rho u\cdot\partial_{t}\varphi+2\alpha\nu\int
_{0}^{T}\int_{\partial\Omega}u\cdot\varphi+2\nu\int_{0}^{T}\int_{\Omega}D(u):
D(\varphi)-\int_{0}^{T}\int_{\Omega}\rho uu\cdot\nabla\varphi,\nonumber\\
=\int_{0}^{T}\int_{\Omega}\rho f\cdot\varphi+\int_{\Omega}v_{0} \varphi(0),
\label{weak1}%
\end{align}
for all $\varphi\in C^{1}([0,T];H^{1}_{\sigma}(\Omega)),$ $\varphi(T,x)=0$
a.e. in $\Omega.$

\item[ii)] The initial data \eqref{prev0b}$_{3}$ is verified in the following
sense:
\[
\langle\rho(0),\psi\rangle_{W^{-1,p}(\Omega),W_{0}^{1,p^{\prime}}(\Omega
)}=\int_{\Omega}\rho_{0}\psi dx,\ \forall\ \psi\in W_{0}^{1,p^{\prime}}
(\Omega).
\]

\item[iii)] The following energy inequality
\begin{align}
\label{energyineq}\frac{1}{2}\Vert\sqrt{\rho(t)}u(t)\Vert_{2}^{2}+2\nu
\alpha\int_{0}^{t}\int_{\partial\Omega}\vert u\vert^{2}+2\nu\int_{0}^{t}\Vert
Du\Vert_{2}^{2}\leq\frac{1}{2}\Bigl\Vert\frac{v_{0}}{\sqrt{\rho_{0}}}
\Bigr\Vert_{2}^{2} +\int_{0}^{t}\int_{\Omega}\rho f\cdot u
\end{align}
holds for a.e. $t\in(0,T).$
\end{enumerate}
\end{definition}

Let us make some commentaries about the previous definition. We first notice
that the divergence-free boundary condition of the velocity field and the
boundary condition \eqref{prev0b}$_{1}$ are given by the choice of the space
$H_{\sigma}^{1}(\Omega)$. The weak formulation \eqref{weak1} also contains the
boundary condition \eqref{prev0b}$_{2}$ in the sense that if $u$ is more
regular, say $H_{\sigma}^{1}(\Omega)\cap H^{2}(\Omega)$, it can be recovered.
Indeed, first let us just recall, for the readers convenience, that the
formulation \eqref{weak1} comes from following integration by parts:

\begin{lemma}
(\cite{Iftimie-Planas})\label{part} Let $f$ and $g$ be smooth vector fields
such that $g$ is divergence free and tangent to the boundary. Then
\[
-\int_{\Omega}\Delta f\cdot g =2\int_{\Omega}D(f): D(g) - 2\int_{\partial
\Omega} [D(f)n]_{tan}\cdot g .
\]

\end{lemma}

Now, assuming that $u $ is more regular and using the previous lemma, from
\eqref{weak1}, we obtain
\[
\int_{0}^{T} \int_{\partial\Omega} [D(u)n+ \alpha u ] \cdot\varphi= 0 ,
\]
for any test function $\varphi$ satisfying $\varphi\cdot n = 0 $ on
$\partial\Omega$; consequently, $[D(u)n+ \alpha u ]_{tan} = 0$ on
$\partial\Omega$.

We also note that by taking test functions in $\mathcal{D}([0,T)\times
\overline{\Omega})$ in the form $\varphi_{h}=\psi(x)\theta_{h}(z),$ where
$\psi\in\mathcal{D}(\overline{\Omega})$ with divergente free, and $\theta
_{h}\in\mathcal{D}([0,T))$ such that $\theta_{h}(z)=1$ for $z\leq t$ and
$\theta_{h}(z)=0$ for $z\geq t+h$, and taking the limit as $h\rightarrow0,$ we
obtain an equivalent weak formulation for the momentum equation
\begin{multline*}
\int_{\Omega}(\rho u)(t)\psi+2\alpha\nu\int_{0}^{t}\int_{\partial\Omega}
u\cdot\psi+2\nu\int_{0}^{t}\int_{\Omega}D(u):D(\psi)-\int_{0}^{t}\int_{\Omega
}\rho uu\cdot\nabla\psi\\
=\int_{0}^{t}\int_{\Omega}\rho f\cdot\psi+\int_{\Omega}v_{0}\psi,
\end{multline*}
from which we deduce that $\rho u\in C_{w}([0,T];L^{\frac{2p}{p+1}}(\Omega))$.
Hence, the initial data \eqref{prev0b}$_{4}$ is verified in the following
sense: $(\rho u)(t)$ converges weakly to $v_{0}$ as $t\rightarrow0^{+}.$
\newline

The result of existence of weak solutions with finite energy is the following.

\begin{theorem}
\label{teorem1} Let $6 \leq p \leq\infty,$ $f\in L^{1}(0,T;L^{\frac{2p}{p-1}
}(\Omega)),$ $\rho_{0}\in L^{p}(\Omega),$ $\rho_{0}\geq0,$ a.e. in $\Omega,$
and $v_{0}\in L^{\frac{2p}{p+1}}(\Omega),$ $\frac{\vert v_{0}\vert^{2}}%
{\rho_{0}}\in L^{1}(\Omega).$ There exists a weak solution $ (\rho,
u ) $ of problem \eqref{prev0}-\eqref{prev0b} in the sense of
Definition \ref{definicion1}.
\end{theorem}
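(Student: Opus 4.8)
The plan is to construct weak solutions by a standard approximation-and-compactness scheme, adapted to the Navier friction boundary conditions and to the presence of vacuum. First I would regularize the problem to remove the degeneracy caused by vacuum: replace the initial density $\rho_0$ by a mollified, strictly positive approximation $\rho_0^\varepsilon = \rho_0 * \eta_\varepsilon + \varepsilon$, and correspondingly smooth the initial momentum $v_0$ so that the quotient $|v_0^\varepsilon|^2/\rho_0^\varepsilon$ stays controlled in $L^1(\Omega)$. With a strictly positive density floor, one can appeal to existing well-posedness theory for the non-homogeneous Navier-Stokes system with slip/Navier boundary conditions in the non-vacuum regime (as in \cite{itoh2000}), possibly after a further Galerkin or Faedo-Galerkin discretization in the velocity variable using a basis of $H^1_\sigma(\Omega)$, to obtain a sequence of approximate solutions $(\rho^\varepsilon, u^\varepsilon)$.

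The second step is to derive the a priori energy estimates uniformly in $\varepsilon$. Testing the momentum equation with $u^\varepsilon$ and using Lemma \ref{part} to handle the viscous term produces exactly the dissipation structure $2\nu\|Du^\varepsilon\|_2^2 + 2\nu\alpha\int_{\partial\Omega}|u^\varepsilon|^2$, leading to the energy inequality \eqref{energyineq}. This yields uniform bounds on $\sqrt{\rho^\varepsilon}u^\varepsilon$ in $L^\infty(0,T;L^2)$ and on $u^\varepsilon$ in $L^2(0,T;H^1_\sigma)$, the latter via a Korn-type inequality controlling $\|u\|_{H^1}$ by $\|Du\|_2$ together with the boundary term (using $u\cdot n=0$). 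The transport equation $\partial_t\rho^\varepsilon + \mathrm{div}(\rho^\varepsilon u^\varepsilon)=0$ propagates the $L^p$ bound on the density, giving $\rho^\varepsilon$ uniformly bounded in $L^\infty(0,T;L^p)$, and by interpolating the momentum $\rho^\varepsilon u^\varepsilon = \sqrt{\rho^\varepsilon}\cdot\sqrt{\rho^\varepsilon}u^\varepsilon$ one recovers the stated bounds $\rho^\varepsilon u^\varepsilon \in L^\infty(0,T;L^{2p/(p+1)})$ and the convective term $\rho^\varepsilon u^\varepsilon u^\varepsilon \in L^1(0,T;L^2)$ via Sobolev embedding $H^1\hookrightarrow L^6$ and the constraint $p\ge 6$.

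The third step is to extract convergent subsequences and pass to the limit. Weak and weak-$\ast$ compactness give $u^\varepsilon \rightharpoonup u$ in $L^2(0,T;H^1_\sigma)$ and $\rho^\varepsilon \overset{\ast}{\rightharpoonup} \rho$ in $L^\infty(0,T;L^p)$. For the density, from the equation one bounds $\partial_t\rho^\varepsilon$ in a negative Sobolev space, and an Aubin-Lions-type argument yields strong convergence of $\rho^\varepsilon$ in $C([0,T];W^{-1,p}(\Omega))$, which lets one pass to the limit in the transport equation and identify $\rho\ge 0$. For the momentum, one needs strong compactness of $\rho^\varepsilon u^\varepsilon$ to handle the nonlinear convective term; this follows from the uniform bound on $\partial_t(\rho^\varepsilon u^\varepsilon)$ in a dual space (read off from the momentum equation) combined with the spatial regularity, again via Aubin-Lions, giving $\rho^\varepsilon u^\varepsilon \to \rho u$ strongly in, say, $L^2(0,T;L^{3/2})$ or a comparable space, and hence $\rho^\varepsilon u^\varepsilon \otimes u^\varepsilon \to \rho u\otimes u$ in the sense of distributions. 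The energy inequality survives the limit by weak lower semicontinuity of the convex dissipation terms.

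The main obstacle I expect is the compactness of the momentum in the presence of vacuum: the degeneracy means one cannot simply invert $\rho^\varepsilon$ to recover $u^\varepsilon$ pointwise, so identifying the weak limit of the product $\rho^\varepsilon u^\varepsilon \otimes u^\varepsilon$ requires care. The delicate point is to upgrade the weak convergence of $u^\varepsilon$ and the strong convergence of $\rho^\varepsilon$ into convergence of the quadratic momentum flux; the standard remedy, following \cite{lions}, is to prove strong convergence of $\rho^\varepsilon u^\varepsilon$ in an appropriate Lebesgue space and to use the product structure $\rho u\otimes u = (\rho u)\otimes u$ together with the weak limit of $u^\varepsilon$, taking advantage of the fact that $\sqrt{\rho^\varepsilon}u^\varepsilon$ is bounded in $L^\infty L^2$ to control the flux on the set where the density may vanish. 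A secondary technical point is verifying the Korn inequality with the friction boundary term so that $\|Du\|_2$ plus the boundary integral genuinely dominates the full $H^1$ norm on $H^1_\sigma(\Omega)$, which is what makes the energy estimate coercive.
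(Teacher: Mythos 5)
Your overall skeleton (positive approximate density, Galerkin in the velocity, uniform energy estimates, Aubin--Lions compactness, identification of the nonlinear limits via product lemmas, lower semicontinuity for the energy inequality) is the same as the paper's, but two of your key steps have genuine gaps. First, the appeal to \cite{itoh2000} cannot work: that paper provides \emph{local-in-time strong} solutions without vacuum, so it cannot supply approximate solutions on the whole interval $[0,T]$, and there is no pre-existing weak-solution theory for the non-homogeneous system with Navier boundary conditions to fall back on --- that is precisely what this theorem establishes. Hence your ``possibly after a further Galerkin discretization'' is not optional; one must actually solve the coupled density--velocity system at the Galerkin level. The paper does this by linearization and the Schauder fixed point theorem, and, importantly, it does not keep the pure transport equation: it regularizes the continuity equation parabolically ($\partial_t\rho_\epsilon+\mathrm{div}(\rho_\epsilon u_\epsilon)=\epsilon\Delta\rho_\epsilon$ with Neumann boundary condition), adding the compensating term $\frac{\epsilon}{2}(\Delta\rho_\epsilon)u_\epsilon$ to the momentum equation so that the energy identity survives; this also yields the $L^2(0,T;H^1)$ bound on $\rho^m$ used for strong (a.e.) compactness of the density.

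Second, and more seriously, your coercivity argument fails in the generality of the theorem. You propose to dominate $\|u\|_{H^1}$ by $\|Du\|_2$ plus the boundary friction term, but the friction coefficient is only assumed to satisfy $\alpha\geq 0$: when $\alpha=0$ there is no boundary term in the dissipation, and on rotationally symmetric domains $D(\cdot)$ has a nontrivial kernel in $H^1_\sigma(\Omega)$ (rigid rotations tangent to the boundary), so $\|Du\|_2$ alone does not control $\|u\|_{H^1}$. Nor can you invoke the classical second Korn inequality $\|u\|_{H^1}\lesssim\|Du\|_2+\|u\|_2$, because with vacuum the energy only controls $\|\sqrt{\rho}\,u\|_2$, not $\|u\|_2$. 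The paper's fix is the generalized Korn inequality of Feireisl, $\|v\|_{H^1}^2\leq C\left(\|D(v)\|_2^2+\|\rho v\|_1^2\right)$ with the density itself as weight, made applicable by conservation of total mass ($\int_\Omega\rho^m=\int_\Omega\rho_{0,\epsilon}\geq\frac12\|\rho_0\|_1$ for small $\epsilon$); this is the step that makes the energy estimate coercive for all $\alpha\geq0$ in the presence of vacuum, and it is missing from your argument. A smaller technical point: the strong convergence of $\rho^\epsilon u^\epsilon$ in $L^2(0,T;L^{3/2}(\Omega))$ you invoke is not attainable (there are no positive-order spatial bounds on the momentum); Aubin--Lions only gives compactness in negative-order spaces such as $L^2(0,T;W^{-1,\frac{6p}{p+6}}(\Omega))$, which nevertheless suffices once combined with the weak $L^2(0,T;H^1_\sigma)$ convergence of $u^\epsilon$ and the continuity of the product map $H^1(\Omega)\times W^{-1,p}(\Omega)\to W^{-1,\frac{6p}{p+6}}(\Omega)$, exactly as the paper does.
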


\begin{remark}
Let us observe that the density $ \rho $ belongs to
$C([0,T];L^{p}(\Omega)).$ In fact, noting that $u\in L^{2}(0,T;H_{\sigma}%
^{1}(\Omega))\subset L^{1}(0,T;L^{6}(\Omega))$ and $1\leq p^{\prime}\leq
2\leq6,$ the desired claim follows from standard approximation and
regularization arguments for transport equations (see e.g. \cite{lionsb}).  On
the other hand, when $\rho_{0}(x)\geq c_{0}>0,$ the weak solution of
\eqref{prev0}-\eqref{prev0b} given by Theorem \ref{teorem1} also verifies
$u\in L^{\infty}(0,T;L^{2}(\Omega)).$
\end{remark}

To prove the existence of a weak solution with finite energy, we first
introduce a regularized problem, depending on a small positive parameter
$\epsilon,$ which is constructed by a regularization of the continuity and
momentum equations, as well as, a regularization of the data. More explicitly,
fixed $\epsilon>0,$ we consider the following regularized problem related to
\eqref{prev0}-\eqref{prev0b}: Find $(u_{\epsilon},\rho_{\epsilon})$ solution
of system%

\begin{equation}
\left\{
\begin{array}
[c]{rcl}%
\partial_{t}(\rho_{\epsilon}u_{\epsilon})+\mbox{div}\ (\rho_{\epsilon
}u_{\epsilon}u_{\epsilon})-\nu\Delta u_{\epsilon}+\nabla\pi_{\epsilon}
=\rho_{\epsilon}f+\frac{\epsilon}{2}(\Delta\rho_{\epsilon})u_{\epsilon
}\ \mbox{in}\ Q, &  & \\
\mbox{div}\ u_{\epsilon}=0\ \mbox{in}\ Q, &  & \\
\partial_{t}\rho_{\epsilon}+\mbox{div}\ (\rho_{\epsilon}u_{\epsilon}%
)=\epsilon\Delta\rho_{\epsilon}\ \mbox{in}\ Q, &  & \\
u_{\epsilon}\cdot n=0\ \mbox{on}\ \Sigma, &  & \\
\left[  D(u_{\epsilon})n+\alpha u_{\epsilon}\right]  _{tan}%
=0\ \mbox{on}\ \Sigma, &  & \\
\dfrac{\partial\rho_{\epsilon}}{\partial n}=0\ \mbox{on}\ \Sigma, &  & \\
\rho_{\epsilon}(0)=\rho_{0,\epsilon}\ \mbox{in}\ \Omega, &  & \\
(\rho_{\epsilon}u_{\epsilon})(0)=v_{0,\epsilon}\ \mbox{in}\ \Omega, &  &
\end{array}
\right.  \label{prev1}%
\end{equation}
where $\rho_{0,\epsilon},$  similarly to
\cite[p.149]{fir} for $6\leq p<\infty$, is such that $\rho_{0,\epsilon}\in
C^{2,r}(\bar{\Omega})$, $r\in(0,1)$ with
\begin{equation}
\left\{
\begin{array}
[c]{rcl}%
\dfrac{\partial\rho_{0,\epsilon}}{\partial n}=0\ \mbox{on}\ \partial\Omega,\  \displaystyle0<\epsilon\leq\rho_{0,\epsilon}(x),\ x\in\Omega,&  &\\
|\{x\in\Omega:\rho_{0,\epsilon }(x)<\rho_{0}(x)\}|\rightarrow0\
\mbox{as}\ \epsilon\rightarrow0, &  &\\
\rho_{0,\epsilon}\rightarrow\rho_{0}\ \mbox{in}\ L^{p}(\Omega )\
\mbox{as}\ \epsilon\rightarrow0,\ 6\leq p<\infty, &  &\\
\rho_{0,\epsilon}\rightharpoonup \rho_{0}\ \mbox{weakly-}\ast \mbox{in}\
L^{\infty}(\Omega )\  \mbox{as}\
\epsilon\rightarrow0,\  p=\infty,
\end{array}
\right.  \label{e1}%
\end{equation}
and the initial linear momentum $v_{0,\epsilon}$ is defined as
\begin{equation}
v_{0,\epsilon}(x)=\left\{
\begin{array}
[c]{rcl}%
v_{0}\ \mbox{if}\ \rho_{0,\epsilon}(x)\geq\rho_{0}(x), &  & \\
0\ \mbox{if}\ \rho_{0,\epsilon}(x)<\rho_{0}(x). &  &
\end{array}
\right.  \label{id}%
\end{equation}

We introduce the concept of weak-strong solution to the previous regularized system.

\begin{definition}
\label{defaprox} Let $6\leq p\leq\infty$. A weak-strong solution of
\eqref{prev1} is a pair of functions $(u_{\epsilon},\rho_{\epsilon})$
satisfying $u_{\epsilon}\in L^{2}(0,T;H_{\sigma}^{1}(\Omega)),\rho_{\epsilon
}\in C([0,T];W^{-1,p}(\Omega))\cap L^{\infty}(0,T;L^{p}(\Omega)),$
$\rho_{\epsilon}>0$ a.e. in $Q,$ ${\rho}_{\epsilon}{u}_{\epsilon}\in
L^{\infty}(0,T;L^{\frac{2p}{p+1}}(\Omega)),$ $\rho_{\epsilon}u_{\epsilon
}u_{\epsilon}\in L^{1}(0,T;L^{2}(\Omega)),$ $\nabla\rho_{\epsilon}\in
L^{2}(0,T;L^{2}(\Omega)),$ such that:

\begin{enumerate}
\item[i)] equation \eqref{prev1}$_{3}$ holds a.e. in $Q.$ The boundary
condition
\eqref{prev1}$_{6}$ holds a.e. on $\Sigma,$ and the initial condition
\eqref{prev1}$_{7}$ holds a.e. in $\Omega,$

\item[ii)] the momentum equation \eqref{prev1}$_{1}$ is verified in the
following sense
\begin{multline}
-\int_{0}^{T}\int_{\Omega}\rho_{\epsilon}u_{\epsilon}\cdot\partial_{t}
\varphi+2\alpha\nu\int_{0}^{T}\int_{\partial\Omega}u_{\epsilon}\cdot
\varphi+2\nu\int_{0}^{T}\int_{\Omega}D(u_{\epsilon}) :D(\varphi)\\
-\int_{0}^{T}\int_{\Omega}\rho_{\epsilon}u_{\epsilon}u_{\epsilon}\cdot
\nabla\varphi+\frac{\epsilon}{2}\int_{0}^{T}\int_{\Omega}\nabla\rho_{\epsilon
}\nabla(u_{\epsilon}\cdot\varphi) =\int_{0}^{T}\int_{\Omega}\rho_{\epsilon
}f\cdot\varphi+\int_{\Omega}v_{0,\epsilon}\varphi(0),\label{aprox1}%
\end{multline}
for $\varphi\in C^{1}([0,T]\times\overline{\Omega}),$ with
$\mbox{div}\ \varphi=0,$ $\varphi\cdot n=0$ on $\Sigma,$ and $ \varphi(T,x) = 0 $ in $ \Omega.$

\end{enumerate}
\end{definition}

In order to show the existence of weak-strong solution to the regularized
problem we will consider the Galerkin approximations for the momentum equation
and then will use a limiting procedure. The existence of solutions for this
approximate problem will be obtained by linearization and the Schauder fixed
point theorem. We can now state the result of existence weak-strong solution
for the regularized system \eqref{prev1}.

\begin{proposition}
\label{prop1} Let $\rho_{0,\epsilon}$ be as in \eqref{e1} and $p,u_{0},f$ as
in Theorem \ref{teorem1}. Then, there exists $(u_{\epsilon},\rho_{\epsilon})$
a weak-strong solution of \eqref{prev1}, in the sense of Definition
\ref{defaprox}. Moreover, $(u_{\epsilon},\rho_{\epsilon})$ verifies  $\rho_{\epsilon}u_{\epsilon}\in
L^{2}(0,T;L^{\frac{6p}{p+6}}(\Omega))$, $\rho_{\epsilon}\in L^{\varrho
}(0,T;W^{2,\varrho}(\Omega))$ and $\partial_{t}\rho_{\epsilon}\in L^{\varrho
}(0,T;L^{\varrho}(\Omega))$, for some $\varrho\geq\frac{3}{2}.$
\end{proposition}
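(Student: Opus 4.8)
The plan is to prove Proposition \ref{prop1} via a two-level approximation scheme: an outer regularization (already encoded in $\epsilon$) and an inner Galerkin truncation of the velocity field, combined with a fixed-point argument to handle the nonlinear coupling between the parabolic density equation \eqref{prev1}$_3$ and the momentum equation. First I would fix a Galerkin basis $\{w_k\}$ adapted to the space $H^1_\sigma(\Omega)$ (e.g. eigenfunctions of the Stokes-type operator associated with the Navier boundary conditions, so that div-free and the boundary condition $w_k\cdot n=0$ hold automatically), and seek an approximate velocity $u_{\epsilon,N}=\sum_{k=1}^N g_k(t)w_k$. The coupling is resolved by the Schauder fixed-point theorem: given a candidate velocity $\tilde u$ in a suitable closed convex ball of $L^2(0,T;H^1_\sigma)$, solve the \emph{linear} parabolic problem $\partial_t\rho+\mathrm{div}(\rho\tilde u)=\epsilon\Delta\rho$ with Neumann condition $\partial_n\rho=0$ and initial data $\rho_{0,\epsilon}$. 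Since $\rho_{0,\epsilon}\in C^{2,r}(\bar\Omega)$ with $0<\epsilon\le\rho_{0,\epsilon}$, standard linear parabolic theory gives a unique solution $\rho$ with the maximum principle ensuring $\rho>0$ (indeed bounded below by a positive constant depending on $\epsilon$), and parabolic regularity gives $\rho\in L^\varrho(0,T;W^{2,\varrho})$, $\partial_t\rho\in L^\varrho(0,T;L^\varrho)$ together with the energy-type bound on $\nabla\rho\in L^2(0,T;L^2)$. Feeding this $\rho$ back into the Galerkin momentum ODE system yields, via the Carathéodory existence theorem and a second fixed point (or directly by the nonlinear ODE theory), an approximate velocity; defining the map $\tilde u\mapsto u_{\epsilon,N}$ and verifying its continuity and compactness closes the Schauder argument.

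Next I would derive the a priori energy estimate at the Galerkin level. Testing the momentum equation with $u_{\epsilon,N}$ and crucially using the density equation \eqref{prev1}$_3$ to handle the transport and the artificial terms $\frac\epsilon2(\Delta\rho_\epsilon)u_\epsilon$ and $\frac\epsilon2\nabla\rho_\epsilon\nabla(u_\epsilon\cdot\varphi)$, the troublesome $\epsilon$-terms should cancel the time-derivative-of-density contributions so that one recovers a clean energy identity controlling $\|\sqrt{\rho_\epsilon}u_{\epsilon,N}\|_{L^\infty_t L^2}$, $\sqrt\nu\|Du_{\epsilon,N}\|_{L^2_tL^2}$, and the boundary term $\sqrt{\nu\alpha}\|u_{\epsilon,N}\|_{L^2(\Sigma)}$; Korn's inequality (valid for vector fields in $H^1_\sigma$ with the Navier boundary structure) then upgrades the $D(u)$ control to a full $L^2(0,T;H^1)$ bound. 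The bound on $f\in L^1(0,T;L^{2p/(p-1)})$ enters through Hölder pairing $\rho_\epsilon f\cdot u$ with $\rho_\epsilon\in L^\infty_tL^p$ and $u\in L^6$, and a Grönwall argument absorbs the resulting terms. These bounds are uniform in $N$ (though not yet in $\epsilon$), so I can pass to the limit $N\to\infty$ using weak/weak-$*$ compactness for $u_\epsilon$, the parabolic compactness for $\rho_\epsilon$ (Aubin--Lions, since $\rho_\epsilon$ is bounded in $L^\varrho(0,T;W^{2,\varrho})$ with $\partial_t\rho_\epsilon$ bounded), and strong $L^2$ convergence of $\rho_\epsilon$ to identify the nonlinear products $\rho_\epsilon u_\epsilon u_\epsilon$ and $\rho_\epsilon u_\epsilon$ in the weak formulation \eqref{aprox1}.

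To justify the regularity claims in the statement, I would interpolate: from $\sqrt{\rho_\epsilon}u_\epsilon\in L^\infty_tL^2$ and $\rho_\epsilon\in L^\infty_tL^p$ one gets $\rho_\epsilon u_\epsilon=\sqrt{\rho_\epsilon}\cdot\sqrt{\rho_\epsilon}u_\epsilon\in L^\infty_tL^{2p/(p+1)}$ by Hölder, and combining with $u_\epsilon\in L^2_tL^6$ gives $\rho_\epsilon u_\epsilon\in L^2(0,T;L^{6p/(p+6)})$ exactly as claimed. The quadratic term $\rho_\epsilon u_\epsilon u_\epsilon\in L^1(0,T;L^2)$ follows similarly by writing it as $(\sqrt{\rho_\epsilon}u_\epsilon)(\sqrt{\rho_\epsilon}u_\epsilon)$ with $\rho_\epsilon\in L^\infty_tL^p$, $p\ge6$, and using the Sobolev embedding for $u_\epsilon$. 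The parabolic regularity $\rho_\epsilon\in L^\varrho(0,T;W^{2,\varrho})$ and $\partial_t\rho_\epsilon\in L^\varrho(0,T;L^\varrho)$ for some $\varrho\ge\tfrac32$ comes from maximal $L^\varrho$-regularity for the Neumann heat equation applied to \eqref{prev1}$_3$, once the transport term $\mathrm{div}(\rho_\epsilon u_\epsilon)=u_\epsilon\cdot\nabla\rho_\epsilon$ (since $\mathrm{div}\,u_\epsilon=0$) is shown to lie in $L^\varrho(0,T;L^\varrho)$; this is verified by Hölder with $u_\epsilon\in L^2_tL^6$ and $\nabla\rho_\epsilon\in L^2_tL^2$, yielding $u_\epsilon\cdot\nabla\rho_\epsilon\in L^1_tL^{3/2}$, and bootstrapping via the gained regularity.

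The main obstacle I expect is the Schauder fixed-point step together with the uniform-in-$N$ identification of the nonlinear terms in the \emph{variable-density} setting: because $\rho_\epsilon$ may be small and the momentum variable is $\rho_\epsilon u_\epsilon$ rather than $u_\epsilon$, one must be careful that the energy estimate genuinely controls $u_\epsilon$ in $H^1$ (via Korn and the strict positivity $\rho_\epsilon\ge\epsilon$ at the $\epsilon$-level, which is what makes the \emph{inner} problem well-posed even though the limiting problem of Theorem \ref{teorem1} allows vacuum). Ensuring the artificial viscosity terms $\frac\epsilon2(\Delta\rho_\epsilon)u_\epsilon$ and the corresponding weak-form term combine correctly so that the energy estimate is truly coercive — and that they vanish in a controlled way but are kept for this fixed $\epsilon>0$ — is the delicate bookkeeping at the heart of the proof.
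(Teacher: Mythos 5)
Your overall architecture coincides with the paper's: a Galerkin truncation of the velocity coupled to the full parabolic density equation, linearization plus the Schauder fixed point theorem, then energy estimates, a Korn inequality, compactness, and parabolic maximal regularity. (One structural remark: the paper performs the fixed point in the finite-dimensional space $C([0,T];\mathcal{Y}^{m})$, which is what makes the classical solvability lemma for the transport--diffusion equation applicable to the frozen velocity and turns compactness of the map into an Arzel\`a--Ascoli statement; your ball in $L^{2}(0,T;H^{1}_{\sigma}(\Omega))$ would have to be replaced by this.) However, there is a genuine gap at the point where you pass to the limit in the Galerkin parameter: you claim that ``strong $L^{2}$ convergence of $\rho$'' together with weak convergence of the velocities identifies the convective term $\rho_{\epsilon}u_{\epsilon}u_{\epsilon}$. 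This is false as stated. Strong convergence of $\rho^{m}$ and weak convergence of $u^{m}$ do give $\rho^{m}u^{m}\rightharpoonup\rho_{\epsilon}u_{\epsilon}$, but the convective term is then a product of the two merely weakly convergent sequences $\rho^{m}u^{m}$ and $u^{m}$, and quadratic expressions in weakly convergent sequences do not pass to the limit. This is precisely the central compactness difficulty of the nonhomogeneous Navier--Stokes system, and nothing in your scheme addresses it: your Aubin--Lions argument applies only to $\rho^{m}$, and you never control, uniformly in $m$, a time derivative of $u^{m}$ or of the momentum $\rho^{m}u^{m}$.

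What is missing is the paper's estimate on $\partial_{t}(\rho^{m}u^{m})$: testing the momentum equation against $v\in\mathcal{D}(\Omega)$ and using the bounds already obtained yields $\partial_{t}(\rho^{m}u^{m})$ bounded in $L^{1}(0,T;W^{-1,s^{\prime}}(\Omega))$ for $s>3$; combined with the bound for $\rho^{m}u^{m}$ in $L^{2}(0,T;L^{\frac{6p}{p+6}}(\Omega))$ and the compact embedding $L^{\frac{6p}{p+6}}(\Omega)\hookrightarrow\hookrightarrow W^{-1,\frac{6p}{p+6}}(\Omega)$, Simon's compactness lemma gives strong convergence of $\rho^{m}u^{m}$ in $L^{2}(0,T;W^{-1,\frac{6p}{p+6}}(\Omega))$, and the continuity of the product map $H^{1}(\Omega)\times W^{-1,\frac{6p}{p+6}}(\Omega)\rightarrow W^{-1,\frac{3p}{p+3}}(\Omega)$ then identifies $\rho^{m}u^{m}u^{m}\rightharpoonup\rho_{\epsilon}u_{\epsilon}u_{\epsilon}$ in the sense of distributions. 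Two further, smaller, points. First, your bootstrap for the parabolic regularity of $\rho_{\epsilon}$ starts from $u_{\epsilon}\cdot\nabla\rho_{\epsilon}\in L^{1}(0,T;L^{3/2}(\Omega))$, but maximal regularity fails at the $L^{1}$-in-time endpoint; the paper instead first interpolates the two momentum bounds to get $\rho^{m}u^{m}\in L^{q}(0,T;L^{2}(\Omega))$ with $q\geq6$, applies maximal regularity to $\partial_{t}\rho^{m}-\epsilon\Delta\rho^{m}=-\mathrm{div}(\rho^{m}u^{m})$ to obtain $\nabla\rho^{m}\in L^{q}(0,T;L^{2}(\Omega))$, and only then lands $u^{m}\cdot\nabla\rho^{m}$ in $L^{\varrho}(Q)$ with $\varrho\geq3/2$. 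Second, passing to the limit in the artificial term $\frac{\epsilon}{2}\int\nabla\rho^{m}\nabla(u^{m}\cdot\varphi)$ requires strong $L^{2}(Q)$ convergence of $\nabla\rho^{m}$, since it multiplies the only weakly convergent $\nabla u^{m}$; the paper derives this from the exact energy identity for the density equation (norm convergence plus weak convergence), whereas your Aubin--Lions compactness in $L^{\varrho}(0,T;W^{1,\varrho}(\Omega))$, $\varrho\geq3/2$, is not by itself sufficient.
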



\begin{proof}
We split the proof into five steps. \newline

\textbf{Step 1: Approximate problem.} Let $\{w^{k}\}_{k\in\mathbb{N}}$ be a
smooth basis of $H^{1}_{\sigma}(\Omega)$, orthonormal in $L^{2}(\Omega)$ and
let $\mathcal{Y}^{m} = span\{w^{1},...,w^{m}\}.$
Let us consider a sequence $\{f^{m}\}_{m \in\mathbb{N}}$ in $C ([0,T];L^{\frac
{2p}{p-1}}(\Omega))$ such that $f^{m}\rightarrow f$ in $L^{1}(0,T;L^{\frac
{2p}{p-1}}(\Omega)).$

For each $m\in\mathbb{N}$ consider the problem of finding $\rho^{m}\in
C([0,T];C^{2}(\bar{\Omega}))$ and $u^{m}\in C^{1}([0,T];\mathcal{Y}^{m})$
satisfying
\begin{equation}
\left\{
\begin{array}
[c]{rcl}%
\partial_{t}\rho^{m}+\mbox{div}\ (\rho^{m}u^{m})=\epsilon\Delta\rho
^{m}\ \mbox{in}\ Q, &  & \\
\dfrac{\partial\rho^{m}}{\partial n}=0\ \mbox{on}\ \Sigma, &  & \\
\rho^{m}(0)=\rho_{0,\epsilon}\ \mbox{in}\ \Omega, &  &
\end{array}
\right.  \label{prev2}%
\end{equation}%
\begin{equation}
\left\{
\begin{array}
[c]{rcl}%
\displaystyle\int_{\Omega}\left\{  \partial_{t}(\rho^{m}u^{m})\cdot v-\rho
^{m}u^{m}u^{m}\cdot\nabla v-\rho^{m}f^{m}\cdot v+2\nu Du^{m}:Dv\right\}  dx &
& \\
\displaystyle+2\alpha\nu\int_{\partial\Omega}u^{m}\cdot v=-\frac{\epsilon}%
{2}\int_{\Omega}\nabla\rho^{m}\nabla(u^{m}\cdot v),\ \forall v\in
\mathcal{Y}^{m}, &  & \\
u^{m}(0)=u_{0}^{m}, &  &
\end{array}
\right.  \label{prev3}%
\end{equation}
where $u_{0}^{m}\in\mathcal{Y}^{m}$ is uniquely determined by
\begin{equation}
\int_{\Omega}\rho_{0,\epsilon}u_{0}^{m}\phi dx=\int_{\Omega}v_{0,\epsilon}\phi
dx\ \mbox{for all}\ \phi\in\mathcal{Y}^{m}. \label{din}%
\end{equation}

For the sake of simplicity, we shall omit the dependence of $(u^{m},\rho^{m})$
on the parameter $\epsilon.$ Observe that $u_{0}^{m}$ is well-defined because
the matrix with coefficients $\int_{\Omega}\rho_{0,\epsilon}w^{j}w^{i}dx$ is
invertible. Moreover, by using (\ref{din}), the definition of $v_{0,\epsilon}$
(see (\ref{id})) and the assumption $\frac{|v_{0}|^{2}}{\rho_{0}}\in
L^{1}(\Omega),$ the following estimate holds true:
\begin{equation}
\int_{\Omega}v_{0,\epsilon}u_{0}^{m}dx=\int_{\Omega}\rho_{0,\epsilon}%
|u_{0}^{m}|^{2}dx\leq\int_{\Omega}\frac{|v_{0,\epsilon}|^{2}}{\rho
_{0,\epsilon}}dx\leq\int_{\Omega}\frac{|v_{0}|^{2}}{\rho_{0}}dx\leq C.
\label{estdato}%
\end{equation}
\newline

\textbf{Step 2: Existence of solutions to the approximate problem.} Fixed
$m\in\mathbb{N},$ the existence of approximate solutions $(u^{m},\rho^{m})$ of
(\ref{prev2})-(\ref{prev3}) is proved by linearization and the Schauder fixed
point theorem. In fact, fixed $m\in\mathbb{N}$ and given $w\in
C([0,T];\mathcal{Y}^{m}),$ the following lemma gives the existence of
$\rho^{m}\in C([0,T];C^{2}(\bar{\Omega}))$ such that
\begin{equation}
\left\{
\begin{array}
[c]{rcl}%
\partial_{t}\rho^{m}+\mbox{div}\ (\rho^{m}w)=\epsilon\Delta\rho^{m}%
\ \mbox{in}\ Q, &  & \\
\dfrac{\partial\rho^{m}}{\partial n}=0\ \mbox{on}\ \Sigma, &  & \\
\rho^{m}(0)=\rho_{0,\epsilon}\ \mbox{in}\ \Omega. &  &
\end{array}
\right.  \label{prev4}%
\end{equation}

\begin{lemma}
(\cite[Lemma 3.1]{Feireisl}) \label{lemarho} Let $w\in C([0,T];\mathcal{Y}%
^{m})$ be a given vector field. Suppose that $\rho_{0,\epsilon}\in
C^{2,r}(\bar{\Omega}),$ $r\in(0,1),$ $\inf_{x\in\Omega}\rho_{0,\epsilon}(x)>0$
and satisfies the compatibility condition $\frac{\partial\rho_{0,\epsilon}%
}{\partial n}=0$ on $\partial\Omega.$ Then problem \eqref{prev4} possesses a
unique classical solution
\[
\rho^{m}=\rho^{m}(w)\in\mathcal{W}=\{\rho^{m}\in C([0,T];C^{2,r}(\bar{\Omega
})),\ \partial_{t}\rho^{m}\in C([0,T];C^{0,r}(\bar{\Omega}))\}.
\]
Moreover, the mapping $w\rightarrow\rho^{m}(w)$ maps bounded sets in
$C([0,T];\mathcal{Y}^{m})$ into bounded sets in $\mathcal{W}$ and it is
continuous with values in $C^{1}([0,T]\times\bar{\Omega})$.

Finally, as $div(w)=0$, it holds
\begin{equation}
\inf_{x\in\Omega}\rho_{0,\epsilon}(x)\leq\rho^{m}(x,t)\leq\sup_{x\in\Omega
}\rho_{0,\epsilon}(x),\ \ t\in\lbrack0,T],\ x\in\Omega. \label{aux-pho}%
\end{equation}

\end{lemma}


Posteriorly, given $w\in C([0,T];\mathcal{Y}^{m})$ and $\rho^{m}\in
C([0,T];C^{2}(\bar{\Omega})),$ we solve the following linear problem:

Find $u^{m}(x,t)=\sum_{i=1}^{m}\psi_{i}(t)w^{i}(x)$ satisfying
\[
\left\{
\begin{array}
[c]{rcl}%
\displaystyle\int_{\Omega}\left\{  \partial_{t}(\rho^{m}u^{m})\cdot v-\rho
^{m}u^{m}w\cdot\nabla v-\rho^{m}f^{m}\cdot v+2\nu Du^{m}:Dv\right\}  dx &  &
\\
\displaystyle+2\alpha\nu\int_{\partial\Omega}u^{m}\cdot v+\frac{\epsilon}
{2}\int_{\Omega}\nabla\rho^{m}\nabla(u^{m}\cdot v) = 0,\ \forall
v\in\mathcal{Y}^{m}, &  & \\
u^{m}(0)=u_{0}^{m}, &  &
\end{array}
\right.
\]
which is equivalent to
\begin{equation}
\left\{
\begin{array}
[c]{rcl}%
\displaystyle\int_{\Omega}\left\{  \rho^{m}(\partial_{t}u^{m}+(w\cdot
\nabla)u^{m}-f^{m})\cdot v+2\nu Du^{m}:Dv\right\}  dx &  & \\
+2\alpha\nu\displaystyle\int_{\partial\Omega}u^{m}\cdot v+\frac{\epsilon}
{2}\displaystyle\int_{\Omega}\nabla\rho^{m}\nabla(u^{m}\cdot v)=0,\ \forall
v\in\mathcal{Y}^{m}, &  & \\
u^{m}(0)=u_{0}^{m}. &  &
\end{array}
\right.  \label{prev5}%
\end{equation}

Notice that \eqref{prev5} leads us to a linear system of ordinary differential
equations for $\{\psi_{j}(t)\}_{j=1}^{m}$:
\begin{equation}
\left\{
\begin{array}
[c]{rcl}%
\displaystyle\sum_{j=1}^{m}a_{ij}^{1}(t)\dfrac{d\psi_{j}}{dt}+\sum_{j=1}%
^{m}a_{ij}^{2}(t)\psi_{j}+a_{i}^{3}(t)=0,\ \mbox{in}\ (0,T),\ 1\leq i\leq m, &
& \\
\{\psi_{j}(0)\}_{j=1}^{m}\equiv\mbox{components of}\ u_{0}^{m}, &  &
\end{array}
\right.  \label{prev6}%
\end{equation}
where
\begin{align*}
a_{ij}^{1}  &  =\int_{\Omega}\rho^{m}w^{j}w^{i}dx\in C^{1}([0,T]),\\
a_{ij}^{2}  &  =\int_{\Omega}\{(\rho^{m}w\cdot\nabla w^{j})\cdot w^{i}+2\nu
Dw^{j}:Dw^{i}\}dx+2\alpha\nu\int_{\partial\Omega}w^{j}\cdot w^{i}\\
&  +\frac{\epsilon}{2}\int_{\Omega}\nabla\rho^{m}\nabla(w^{j}\cdot w^{i})dx\in
C^{1}([0,T]),\\
a_{i}^{3}  &  =-\int_{\Omega}\rho^{m}f^{m}w^{i}dx\in C^{1}([0,T]).
\end{align*}
Since $\rho^{m}\geq\epsilon,$ it holds
\[
\sum_{i,j}a_{ij}^{1}(t)\xi_{i}\xi_{j}=\int_{\Omega}\rho^{m}(x,t)(\sum
_{i=1}^{m}\xi_{i}w^{i}(x))^{2}dx\geq\epsilon\sum_{i=1}^{m}|\xi_{i}%
|^{2},\ \forall\xi\in\mathbb{R}^{m}.
\]
It follows that the matrix $A=\{a_{ij}^{1}\}_{i,j}$ is symmetric and positive
definite, and in particular $A$ is invertible. From the classical theory of
ordinary differential equations, system \eqref{prev6} has a unique solution
$\{\psi_{j}\}_{j=1}^{m}\in(C^{1}([0,T]))^{m}$; then, the solvability of the
system \eqref{prev5} is guaranteed.

Multiplying equation \eqref{prev4}$_{1}$ by $\frac{1}{2}|u^{m}|^{2},$
integrating on $\Omega$ and adding the result to \eqref{prev5} with
$v=u^{m}(t),$ after some calculations, we obtain%

\[
\frac{1}{2}\frac{d}{dt}\Vert\sqrt{\rho^{m}}u^{m}(t)\Vert_{2}^{2}+2\nu\Vert
Du^{m}(t)\Vert_{2}^{2}+2\nu\alpha\int_{\partial\Omega}|u^{m}|^{2}=\int
_{\Omega}\rho^{m}f^{m}\cdot u^{m}.
\]
Thus, the H\"{o}lder inequality implies
\[
\frac{1}{2}\frac{d}{dt}\Vert\sqrt{\rho^{m}}u^{m}(t)\Vert_{2}^{2}\leq\Vert
\sqrt{\rho^{m}}u^{m}(t)\Vert_{2}\Vert\sqrt{\rho^{m}}f^{m}(t)\Vert_{2}.
\]
Using a generalized Gronwall lemma (\cite[Lemma 5]{Simon}), from last
inequality we can obtain
\[
\Vert\sqrt{\rho^{m}}u^{m}(t)\Vert_{2}\leq\Vert\sqrt{\rho_{0,\epsilon}}%
u_{0}^{m}\Vert_{2}+\int_{0}^{T}\Vert\sqrt{\rho^{m}}f^{m}\Vert_{2}.
\]
From the previous inequality, (\ref{aux-pho}) and (\ref{estdato}), we conclude that $u^{m}$ is bounded in
$C([0,T];L^{2}(\Omega))$ (independent of $m$ and $w$). Thus, as $u^{m}
(x,t)=\sum_{j=1}^{m}\psi_{j}(t)w^{j}(x)$ and $\sum_{j=1}^{m}|\psi_{j}
(t)|^{2}=\Vert u^{m}\Vert_{2}^{2},$ then $\{\psi_{j}\}$ is bounded in
$C([0,T])$ which implies that $u^{m}$ is bounded (independent of $w$) in
$C([0,T];\mathcal{Y}^{m}).$ Moreover, if $w$ is bounded in
$C([0,T];\mathcal{Y}^{m}),$ from (\ref{prev6}) and the symmetry of
$A=\{a_{ij}^{1}\},$ the set $\{\partial_{t}\psi_{j}:1\leq j\leq m\}$ is
bounded in $C([0,T])$ which implies that $\partial_{t}u^{m}$ is bounded in
$C([0,T];\mathcal{Y}^{m}).$ Thus, we conclude that $u^{m}$ is bounded in
$C^{1}([0,T];\mathcal{Y}^{m})$ provided $w$ is bounded in $C([0,T];\mathcal{Y}
^{m}).$

Given $w\in C([0,T];\mathcal{Y}^{m}),$ from Lemma \ref{lemarho} there exists a
unique $\rho^{m}\in C([0,T];C^{2,r}(\bar{\Omega}))$ solution of (\ref{prev4}).
Knowing $w$ and $\rho^{m}$ there exists a unique solution $u^{m}$ of
(\ref{prev5}) in $C^{1}([0,T];\mathcal{Y}^{m})$ which is bounded (independent
of $m$) in $C^{1}([0,T];\mathcal{Y}^{m}),$ provided $w$ is bounded in
$C([0,T];\mathcal{Y}^{m}).$ Thus, there are $M_{1},M_{2}>0$ such that $\Vert
u^{m}\Vert_{C^{1}([0,T];\mathcal{Y}^{m})}\leq M_{2}$ if $\Vert w\Vert
_{C([0,T];\mathcal{Y}^{m})}\leq M_{1}.$ Denote by $B_{1}$ the closed ball in
$C([0,T];\mathcal{Y}^{m})$ of radio $M_{1},$ and $B_{2}$ the closed ball in
$C^{1}([0,T];\mathcal{Y}^{m})$ of radio $M_{2}.$ Then, the mapping
\begin{align*}
\mathcal{T}  &  :B_{1}\rightarrow B_{2}\\
&  \ \ w\mapsto u^{m}%
\end{align*}
is continuous. The Arzel\`{a}-Ascoli theorem implies that $B_{2}\subset
C([0,T];\mathcal{Y}^{m})$ compactly and therefore the mapping $\mathcal{T}$ is
continuous and compact from $B_{1}$ into $B_{1}.$ Then, the Schauder fixed
point theorem implies the existence of a fixed point $u^{m}$ for a given
$T>0.$ Taking $\rho^{m}$ the corresponding solution of (\ref{prev4}), we
obtain the existence of an approximate solution $(u^{m},\rho^{m})$ of
(\ref{prev2})-(\ref{prev3}). \newline

\textbf{Step 3: Estimates for $(u^{m},\rho^{m})$.} We will obtain several
estimates for the approximate solution $(u^{m},\rho^{m})$ which are
independent of $m$ and, in general, are also independent of $\epsilon>0.$ In
the sequel, $C$ will denote a constant independent of $m$ and $\epsilon$ that
may change from an estimate to another. If $C$ depends on $\epsilon$ we shall
indicate this dependence by $C(\epsilon).$

The first estimate comes from Lemma \ref{lemarho},
\[
0<\inf_{x\in\Omega}\rho_{0,\epsilon}(x)\leq\rho^{m}(x,t).
\]
Next, we multiply \eqref{prev2}$_{1}$ by $\rho^{m}|\rho^{m}|^{p-2}$, where
$p>2$, integrate by parts and use the incompressibility of the flow $u^{m}$ to
obtain
\[
\frac{1}{p}\frac{d}{dt}\Vert\rho^{m}(t)\Vert_{p}^{p}+\epsilon(p-1)\int
_{\Omega}||\nabla\rho^{m}||\rho^{m}|^{\frac{p-2}{2}}|^{2}=0,
\]
then
\begin{equation}
\ \Vert\rho^{m}\Vert_{L^{\infty}(0,T;L^{p}(\Omega))}\leq\Vert\rho_{0,\epsilon
}\Vert_{L^{p}(\Omega)}\leq C,\label{aux-est-rho1}%
\end{equation}
because \eqref{e1}$_{3}$ and \eqref{e1}$_{4}$. Also, we have that
\begin{equation} \label{energyrho}
\Vert\rho^{m}(t)\Vert_{2}^{2}+2\epsilon \int_0^t \Vert\nabla
\rho^{m}(s)\Vert_{2}^{2} ds= \Vert \rho_{0,\epsilon}\Vert_2^2,
\end{equation}
so, it follows that
\begin{equation}
(\sqrt{\epsilon}\nabla\rho^{m},\sqrt{\epsilon}\Delta\rho^{m}%
)\ \mbox{is uniformly bounded in}\ L^{2}(0,T;L^{2}(\Omega)\times
W^{-1,2}(\Omega)). \label{aux-j}%
\end{equation}

Integrating equation \eqref{prev2}$_{1}$ with respect to the space
variable and using that $\frac{\partial\rho^{m}}{\partial n}|_{\Sigma}=0,$
yield the total mass conservation
\begin{equation}
\int_{\Omega}\rho^{m}(t)dx=\int_{\Omega}\rho_{0,\epsilon}dx,\ t\in\lbrack0,T].
\label{normaL1}%
\end{equation}

Notice that equation \eqref{prev3} is equivalently to
\begin{align}
&  \displaystyle\int_{\Omega}\left\{  \rho^{m}(\partial_{t}u^{m}+(u^{m}%
\cdot\nabla)u^{m}-f^{m})\cdot v+2\nu Du^{m}:Dv\right\}  dx\nonumber\\
&  +2\alpha\nu\int_{\partial\Omega}u^{m}\cdot
v-\frac{\epsilon}{2}\int _{\Omega}\nabla\rho^{m}\nabla(u^{m}\cdot v)
={0}\ \forall v\in\mathcal{Y}^{m}.
\label{g1}%
\end{align}
Multiplying equation \eqref{prev2}$_{1}$ by $\frac{1}{2}|u^{m}|^{2},$
integrating on $\Omega$ and adding the result with \eqref{g1} for
$v=u^{m}(t),$ we find after some calculations
\begin{equation}
\frac{1}{2}\frac{d}{dt}\Vert\sqrt{\rho^{m}}u^{m}(t)\Vert_{2}^{2}+2\nu\Vert
Du^{m}(t)\Vert_{2}^{2}+2\nu\alpha\int_{\partial\Omega}|u^{m}|^{2}=\int
_{\Omega}\rho^{m}f^{m}\cdot u^{m}. \label{e2aaaa}%
\end{equation}
Thus, the H\"{o}lder inequality implies
\[
\frac{1}{2}\frac{d}{dt}\Vert\sqrt{\rho^{m}}u^{m}(t)\Vert_{2}^{2}\leq\Vert
\sqrt{\rho^{m}}u^{m}(t)\Vert_{2}\Vert\sqrt{\rho^{m}}f^{m}(t)\Vert_{2}.
\]
Applying a generalized Gronwall lemma (\cite[Lemma 5]{Simon}), it follows
that
\[
\Vert\sqrt{\rho^{m}}u^{m}(t)\Vert_{2}\leq\Vert\sqrt{\rho_{0,\epsilon}}%
u_{0}^{m}\Vert_{2}+\int_{0}^{T}\Vert\sqrt{\rho^{m}}f^{m}\Vert_{2},
\]
thenceforth, by applying again H\"{o}lder's inequality, and using
\eqref{estdato} and \eqref{aux-est-rho1}, we obtain
\[
\Vert\sqrt{\rho^{m}}u^{m}(t)\Vert_{2}\leq\Vert\sqrt{\rho_{0,\epsilon}}%
u_{0}\Vert_{2}+\Vert\rho^{m}\Vert_{L^{\infty}(0,T;L^{p}(\Omega))}^{1/2}\Vert
f^{m}\Vert_{L^{1}(0,T;L^{\frac{2p}{p-1}}(\Omega))}\leq C,
\]
because $(f^{m})$ is bounded in $L^{1}(0,T;L^{\frac{2p}{p-1}}(\Omega)).$

On the other hand, by integrating \eqref{e2aaaa} on $(0,t)$ and proceeding
similarly as before, we arrive at
\begin{align*}
\frac{1}{2}  &  \Vert\sqrt{\rho^{m}}u^{m}(t)\Vert_{2}^{2}+2\nu\int_{0}%
^{t}\Vert Du^{m}\Vert_{2}^{2}+2\nu\alpha\int_{0}^{t}\int_{\partial\Omega
}|u^{m}|^{2} \\
&  \leq\frac{1}{2}\Vert\sqrt{\rho_{0,\epsilon}}u_{0}^{m}\Vert_{2}^{2}+\int
_{0}^{t}\Vert\sqrt{\rho^{m}}u^{m}\Vert_{2}\Vert\sqrt{\rho^{m}}f^{m}\Vert
_{2}\nonumber\\
&  \leq\frac{1}{2}\Vert\sqrt{\rho_{0,\epsilon}}u_{0}\Vert_{2}^{2}+C\Vert
\sqrt{\rho^{m}}u^{m}\Vert_{L^{\infty}(0,T;L^{2}(\Omega))}\Vert\rho^{m}%
\Vert_{L^{\infty}(0,T;L^{p}(\Omega))}^{1/2}\Vert f^{m}\Vert_{L^{1}%
(0,T;L^{\frac{2p}{p-1}}(\Omega))}.\nonumber
\end{align*}
Therefore, $(Du^{m})$ is bounded in $L^{2}(0,T;L^{2}(\Omega)).$ To
estimate $(u_{m})$ in $L^{2}(0,T;H^{1}(\Omega))$ we shall apply the following
generalized Korn inequality (see \cite[Th. 10.17]{Feireisl})
\begin{align}
\Vert v\Vert_{H^{1}(\Omega)}^{2}  &  \leq C(K,M,p)\left(  \Vert D(v)\Vert_{2}
^{2}+\Vert Rv\Vert_{1}^{2}\right) \nonumber\\
&  \leq C(K,M,p)\left(  \Vert D(v)\Vert_{2} ^{2}+ \| R \|_{1} \Vert
Rv^{2}\Vert_{1}\right)  ,\label{Korn}%
\end{align}
for $v\in H_{\sigma}^{1}(\Omega)$ and any function $R\geq0$ such that
$0<M\leq\int_{\Omega}Rdx$, $\Vert R\Vert_{p}\leq K,$ for some $p>1$.
Without loss of generality we can assume that $\rho_{0}\geq0$ and $\rho
_{0}\not \equiv 0$. It follows from \eqref{normaL1} and $\rho_{0,\epsilon
}\rightarrow\rho_{0}$ in $L^{p}(\Omega),$ $6\leq p<\infty,$ that%
\[
M=\frac{1}{2}\left\Vert \rho_{0}\right\Vert _{1}\leq\int_{\Omega}\rho
^{m}dx=\int_{\Omega}\rho_{0,\epsilon}dx\leq2\left\Vert \rho_{0}\right\Vert
_{1},\text{ for small }\epsilon>0,
\]
so we can take $R=\rho^{m}$ in (\ref{Korn}) and use previous estimates in
order to infer that $(u_{m})$ is bounded in $L^{2}(0,T;H^{1}(\Omega))\subset
L^{2}(0,T;L^{6}(\Omega)),$ independently of $m$ and $\epsilon$. In the case
$p=\infty,$ we also can choose the same $M$ since $\rho_{0,\epsilon},\rho
_{0}\geq0$ and the weak-$\ast$ convergence $\rho_{0,\epsilon}\rightharpoonup
\rho_{0}$ in $L^{\infty}$ implies that $\int_{\Omega}\rho_{0,\epsilon
}dx\rightarrow\int_{\Omega}\rho_0 dx$.

Next, by applying H\"{o}lder's inequality, we estimate
\[
\Vert\sqrt{\rho^{m}}u^{m}\Vert_{L^{2}(0,T;L^{\frac{12p}{6+2p}}(\Omega))}
\leq\Vert\rho^{m}\Vert_{L^{\infty}(0,T;L^{p}(\Omega))}^{1/2}\Vert u^{m}
\Vert_{L^{2}(0,T;L^{6}(\Omega))}\leq C.
\]
Since $p\geq6$, we have that $\frac{12p}{6+2p}\geq4$, hence we can use
interpolation to obtain
\[
\Vert\sqrt{\rho^{m}}u^{m}\Vert_{4}\leq\Vert\sqrt{\rho^{m}}u^{m}\Vert
_{2}^{(p-6)/(4p-6)}\Vert\sqrt{\rho^{m}}u^{m}\Vert_{\frac{12p}{6+2p}
}^{3p/(4p-6)}.
\]
Thus, taking $\varsigma=\frac{2(4p-6)}{3p}$, it follows that $\varsigma\geq2$
and
\[
\Vert\sqrt{\rho^{m}}u^{m}\Vert_{4}^{\varsigma}\leq\Vert\sqrt{\rho^{m}}
u^{m}\Vert_{2}^{2(p-6)/3p}\Vert\sqrt{\rho^{m}}u^{m}\Vert_{\frac{12p}{6+2p}
}^{2}.
\]
Consequently, $(\sqrt{\rho^{m}}u^{m})$ is bounded in $L^{\varsigma}
(0,T;L^{4}(\Omega)),$ which implies that
\begin{equation}
\Vert\rho^{m}u^{m}u^{m}\Vert_{L^{\varsigma/2}(0,T;L^{2}(\Omega))}\leq C.
\label{e10}
\end{equation}

By using that $(\rho^{m}) $, $(\sqrt{\rho^{m}}u^{m})$ and $(u^{m}) $ are
bounded in $L^{\infty}(0,T;L^{p}(\Omega)) $, $L^{\infty}(0,T;L^{2}(\Omega))$
and $L^{2}(0,T;L^{6}(\Omega))$ respectively, we deduce the following bounds
for $(\rho^{m}u^{m}) $,
\begin{align}
\Vert\rho^{m}u^{m}\Vert_{L^{\infty}(0,T;L^{\frac{2p}{p+1}} (\Omega))}  &
\leq\|\rho^{m}\|_{L^{\infty}(0,T;L^{p}(\Omega))}^{\frac{1}{2}} \|\sqrt
{\rho^{m}}u^{m} \|_{L^{\infty}(0,T;L^{2}(\Omega))} \leq C,\label{j21a}\\
\Vert\rho^{m}u^{m}\Vert_{ L^{2}(0,T;L^{\frac{6p}{p+6}}(\Omega))}  &  \leq\|
\rho^{m} \|_{L^{\infty}(0,T;L^{p}(\Omega))} \|u^{m}\|_{L^{2}(0,T;L^{6}
(\Omega))}\leq C. \label{j21}
\end{align}
By interpolation it follows that
\[ \Vert\rho^{m}u^{m}\Vert_2 \leq \Vert\rho^{m}u^{m}\Vert_{L^{\infty}(0,T;L^{\frac{2p}{p+1}} (\Omega))}^\theta \Vert\rho^{m}u^{m}\Vert_{\frac{6p}{p+6}}^{1-\theta},
\]
where $ 0 < \theta = \frac{2p-6}{2p-3} < 1.$ Thus,
\[ \Vert\rho^{m}u^{m}\Vert_{L^q(0,T;L^2(\Omega))}  \leq C , \ \text{ with } q = \frac{2(2p-3)}{3} \geq 6. \]
By using the maximal regularity for  parabolic equations, we obtain
that $ (\rho^m) $ is bounded in $L^q(0,T;H^{1}(\Omega))$, which
implies that $ (\nabla \rho^m) $ is bounded in
$L^q(0,T;L^2(\Omega))$. This bound together the estimate for $u^m $
in $L^2(0,T;L^6(\Omega))$ allow to apply the classical $L^\varrho-$
theory of parabolic equations  (see \cite[Th.10.22]{Feireisl})  to
conclude that
\begin{equation}
\Vert\partial_{t}\rho^{m}\Vert_{L^{\varrho}(0,T;L^{\varrho}(\Omega))}+\Vert\rho^{m}
\Vert_{L^{\varrho}(0,T;W^{2,{\varrho}}(\Omega))}\leq C(\epsilon),
\text{ with } \varrho = \frac{2q}{q+2} \geq \frac{3}{2} . \label{aux-reg-rho1}
\end{equation}

Finally, we are going to estimate the derivative in time of $\rho^{m}$ and
$\rho^{m}u^{m}.$ To this end, consider the equation $\partial_{t}\rho
^{m}=-\mbox{div}\ (\rho^{m}u^{m})+\epsilon\Delta\rho^{m}$ in $Q,$ and notice
that $W^{-1,2}(\Omega)\subset W^{-1,\frac{2p}{p+1}}(\Omega),$ then, by using
\eqref{aux-j} and \eqref{j21a}, we conclude that
\begin{equation}
\Vert\partial_{t}\rho^{m}\Vert_{L^{2}(0,T;W^{-1,\frac{2p}{p+1}}(\Omega))}\leq
C(\Vert\rho^{m}u^{m}\Vert_{L^{\infty}(0,T;L^{\frac{2p}{p+1}}(\Omega))}%
+\sqrt{\epsilon}\Vert\sqrt{\epsilon}\Delta\rho^{m}\Vert_{L^{2} (0,T;W^{-1,2}%
(\Omega))}) \leq C, \label{aux-deriv-time}%
\end{equation}
for small $\epsilon>0$.

From the momentum equation in \eqref{prev3} together with \eqref{e10}, the
H\"{o}lder inequality and Sobolev imbedding, for all $v\in\mathcal{D}(\Omega)$
and $s>3,$ we have
\begin{align*}
&  \Bigl|\frac{d}{dt}\int_{\Omega}\rho^{m}u^{m}v\Bigr|\leq\Vert\rho^{m}%
u^{m}u^{m}\Vert_{2}\Vert\nabla v\Vert_{2}+2\nu\Vert Du^{m}\Vert_{2}\Vert
Dv\Vert_{2}+\Vert\rho^{m}\Vert_{p}\Vert f^{m}\Vert_{\frac{2p}{p-1}}\Vert
v\Vert_{\frac{2p}{p-1}}\nonumber\\
&  \hspace{0.5cm}+2\alpha\nu\Vert u^{m}\Vert_{L^{2}(\partial\Omega)}\Vert
v\Vert_{L^{2}(\partial\Omega)}+\frac{\epsilon}{2}\Vert\nabla\rho^{m}\Vert
_{2}\Vert u^{m}\Vert_{6}\Vert\nabla v\Vert_{3}+\frac{\epsilon}{2}\Vert
\nabla\rho^{m}\Vert_{2}\Vert\nabla u^{m}\Vert_{2}\Vert v\Vert_{\infty
}\nonumber\\
&  \hspace{0.5cm}\leq C\bigl(\Vert\rho^{m}u^{m}u^{m}\Vert_{2}+\Vert\rho
^{m}\Vert_{p}\Vert f^{m}\Vert_{\frac{2p}{p-1}}+\Vert u^{m}\Vert_{H^{1}%
(\Omega)}\nonumber\\
&  \hspace{0.5cm}+\epsilon\Vert\nabla\rho^{m}\Vert_{2}\Vert u^{m}\Vert
_{H^{1}(\Omega)}\bigr)\Vert v\Vert_{W^{1,s}(\Omega)}. 
\end{align*}
Therefore, by using (\ref{aux-j}), we obtain
\[
\Bigl|\frac{d}{dt}\int_{\Omega}\rho^{m}u^{m}v\Bigr|\leq h_{m}\Vert
v\Vert_{W^{1,s}(\Omega)},\ \forall v\in\mathcal{D}(\Omega),
\]
for some $h_{m}\in L^{1}(0,T),$ which implies that
\begin{equation}
\Vert\partial_{t}(\rho^{m}u^{m})\Vert_{L^{1}(0,T;W^{-1,s^{\prime}}(\Omega
))}\leq C.\label{aux-deriv2}%
\end{equation}
\newline

\textbf{Step 4: Convergence properties.} From the uniform estimates obtained
in the previous step, we will deduce some convergences for the approximate
solution. We first observe that from \eqref{energyrho}-\eqref{aux-j}, \eqref{aux-deriv-time}, and since $
H^{1}(\Omega)\hookrightarrow\hookrightarrow L^2(\Omega)\hookrightarrow
W^{-1,\frac{2p}{p+1}}(\Omega),$ by applying Lemma 4 of \cite{Simon}, we conclude that
\[
(\rho^{m})\ \mbox{is relatively compact in}\ L^{2}(0,T;L^2(\Omega)).
\]
Similarly, as $(\rho^{m})$ is bounded in $L^{\infty}(0,T;L^{p}(\Omega))$ and
$L^{p}(\Omega)\hookrightarrow\hookrightarrow W^{-1,p}(\Omega)\hookrightarrow
W^{-1,\frac{2p}{p+1}}(\Omega),$ we have that
\[
(\rho^{m})\ \mbox{is relatively compact in}\ C([0,T];W^{-1,p}(\Omega)).
\]
Moreover, as $s>3\geq\frac{6p}{5p-6},$ it holds $L^{\frac{6p}{p+6}}
(\Omega)\hookrightarrow\hookrightarrow W^{-1,\frac{6p}{p+6}}(\Omega
)\hookrightarrow W^{-1,s^{\prime}}(\Omega);$ so, from \eqref{j21} and
\eqref{aux-deriv2}, we get
\[
(\rho^{m}u^{m})\ \mbox{is relatively compact in}\ L^{2}(0,T;W^{-1,\frac
{6p}{p+6}}(\Omega)).
\]

Thus, in view of the uniform bounds obtained in the previous step, we have
that the sequence $(u^{m},\rho^{m},\rho^{m}u^{m},\rho^{m}u^{m}u^{m})$
converges (up to subsequences) to some $(u_{\epsilon},\rho_{\epsilon},\xi
_{1},\xi_{2})$, as $m\rightarrow\infty$, in the following sense:
\begin{align}
\rho^{m} &  \rightarrow\rho_{\epsilon}\ \mbox{strongly}\ \mbox{in}\ L^{2}
(0,T;L^2(\Omega)) \text{ and a.e. in } Q,\label{j012}\\
\rho^{m} &  \rightarrow\rho_{\epsilon}\ \mbox{weakly-}\ast\mbox{in}\ L^{\infty
}(0,T;L^{p}(\Omega)),\label{j012-2}\\
\rho^{m} &  \rightarrow\rho_{\epsilon}\ \mbox{strongly in}\ C([0,T];W^{-1,p}
(\Omega)),\label{j013}\\
u^{m} &  \rightarrow u_{\epsilon}\ \mbox{weakly in}\ L^{2}(0,T;H_{\sigma}%
^{1}(\Omega)),\label{j014}\\
\rho^{m}u^{m} &  \rightarrow\xi_{1}\ \left\{
\begin{array}
[c]{lcl}%
\mbox{weakly in}\ {L^{2}(0,T;L^{\frac{6p}{p+6}}(\Omega))},\label{j015} &  & \\
\mbox{weakly-}\ast\mbox{in}\ {L^{\infty}(0,T;L^{\frac{2p}{p+1}}(\Omega))}, &
& \\
\mbox{strongly in}\ L^{2}(0,T;W^{-1,\frac{6p}{p+6}}(\Omega)), &  &
\end{array}
\right.  \\
\rho^{m}u^{m}u^{m} &  \rightarrow\xi_{2}\ \mbox{weakly in}\
L^{\varsigma /2}(0,T;L^{2}(\Omega)).\label{j017b}
\end{align}

Next, we identify the limits $\xi_{1}$ and $\xi_{2}.$ To
this end, notice that the product mapping from $H^{1}(\Omega)\times
W^{-1,p}(\Omega)$ to $W^{-1,\frac{6p}{p+6}}(\Omega)$ is continuous
(see \cite[Lemma 3]{Simon}). Therefore, \eqref{j013} and
\eqref{j014} lead us to
\begin{equation}
\rho^{m}u^{m}\rightarrow\rho_{\epsilon}u_{\epsilon}\ \mbox{weakly in}\ L^{2}%
(0,T;W^{-1,\frac{6p}{p+6}}(\Omega)). \label{j25}%
\end{equation}
Convergence \eqref{j25} together with \eqref{j015} implies that $\xi_{1}%
=\rho_{\epsilon}u_{\epsilon}.$ In analogous way, from \eqref{j014} and
\eqref{j015}, we obtain that
\[
\rho^{m}u^{m}u^{m}\rightarrow\rho_{\epsilon}u_{\epsilon}u_{\epsilon
}\ \mbox{weakly in}\ L^{1}(0,T;W^{-1,\frac{3p}{p+3}}(\Omega)).
\]
The uniqueness of the limit in the sense of distributions implies
that $\xi_{2}=\rho_{\epsilon}u_{\epsilon}u_{\epsilon}.$
\\

 \textbf{Step 5: Passing to the limit as
$m\rightarrow\infty.$} Finally, we will prove the existence of a
weak-strong solution to problem \eqref{prev1} by passing to the
limit as $m\rightarrow\infty$ in the approximate problem
\eqref{prev2}-\eqref{prev3}.

Since $\rho^{m}\rightarrow\rho_{\epsilon}$, $\rho^{m}u^{m}\rightarrow
\rho_{\epsilon}u_{\epsilon}$ and $\Delta\rho^{m}\rightarrow\Delta
\rho_{\epsilon}$ in $\mathcal{D}^{\prime}(Q),$ we can pass to the limit in
\eqref{prev2}$_{1}$ in the sense of distributions. Therefore
\[
\partial_{t}\rho_{\epsilon}+\mbox{div}\ (\rho_{\epsilon}u_{\epsilon}%
)=\epsilon\Delta\rho_{\epsilon}\ \mbox{in}\ \ \mathcal{D}^{\prime}(Q).
\]
In view of the estimate (\ref{aux-reg-rho1}), and since $\rho_{\epsilon}$
inherits regularity from $\rho^{m}$, we have in particular that
\[
\partial_{t}\rho_{\epsilon}+\mbox{div}\ (\rho_{\epsilon}u_{\epsilon}%
)=\epsilon\Delta\rho_{\epsilon}\ \text{a.e. in } Q.
\]

From \eqref{j013} we have that $\rho^{m}(0)\rightarrow\rho_{\epsilon}(0)$ in
$W^{-1,p}(\Omega).$ But, the initial condition \eqref{prev2}$_{3}$ says that
$\rho^{m}(0)= \rho_{0,\epsilon}\in C^{2,r}(\bar{\Omega})$, so that
$\rho_{\epsilon}(0)=\rho_{0,\epsilon}.$ It is not difficult to see that the
boundary condition is also satisfied.

In order to pass to the limit in \eqref{prev3}$_{1}$ as $m\rightarrow\infty$, we notice that
by using the energy identity \eqref{energyrho} for $ \rho^m $ and the corresponding one for $\rho^\epsilon$,
we can prove that $\nabla \rho^m $ converges strongly to $ \nabla \rho^\epsilon $ in $L^2(0,T;L^2(\Omega)),$
(see \cite{Feireisl2001} for a similar argument).
Hence,  from the convergences obtained in step 4, we can classically pass to the limit
in \eqref{prev3}$_{1}$ as $m\rightarrow\infty$, and obtain that \eqref{aprox1}
holds true.
The proof is complete.
\end{proof}

\bigskip

\subsection*{Proof Theorem \ref{teorem1}.}

\begin{proof}
Let $(\rho_{\epsilon},u_{\epsilon})$ be the weak-strong solution of
\eqref{prev1} given by Proposition \ref{prop1}. As we mentioned in step 3 of
the proof of Proposition \ref{prop1}, most of the obtained estimates are also
independent of $\epsilon>0.$ So, proceeding as in step 4, we can conclude the
existence of $(\rho,u,\beta_{1},\beta_{2})$ and a subsequence of
$(\rho_{\epsilon},u_{\epsilon},\rho_{\epsilon}u_{\epsilon},\rho_{\epsilon
}u_{\epsilon}u_{\epsilon})$, still indexed by $\epsilon,$ such that the
following convergences hold true as $\epsilon\rightarrow0$:
\begin{align}
\rho_{\epsilon}  &  \rightarrow\rho\ \mbox{weakly}-\ast\ \mbox{in}\ {L^{\infty
}(0,T;L^{p}(\Omega))},\label{j012c}\\
\rho_{\epsilon}  &  \rightarrow\rho\ \mbox{strongly in}\ C([0,T];W^{-1,p}%
(\Omega)),\label{j013c}\\
u_{\epsilon}  &  \rightarrow u\ \mbox{weakly in}\ {L^{2}(0,T;H_{\sigma}%
^{1}(\Omega))},\label{j014c}\\
\rho_{\epsilon}u_{\epsilon}  &  \rightarrow\beta_{1}\ \left\{
\begin{array}
[c]{lcl}%
\mbox{weakly in}\ {L^{2}(0,T;L^{\frac{6p}{p+6}}(\Omega))},\label{j015c} &  &
\\
\mbox{weakly}-\ast\ \mbox{in}\ {L^{\infty}(0,T;L^{\frac{2p}{p+1}}(\Omega))}, &
& \\
\mbox{strongly in}\ L^{2}(0,T;W^{-1,\frac{6p}{p+6}}(\Omega)), &  &
\end{array}
\right. \\
\rho_{\epsilon}u_{\epsilon}u_{\epsilon}  &  \rightarrow\beta_{2}%
\ \mbox{weakly in}\ {L^{\varsigma/2}(0,T;L^{2}(\Omega))}. \label{j017bc}
\end{align}
Moreover, using (\ref{aux-j}) and the continuity of the product mapping from
$H^{1}(\Omega)\times W^{-1,2}(\Omega)$ to $W^{-1,\frac{3}{2}}(\Omega)$, we
obtain
\begin{align*}
\epsilon\nabla\rho_{\epsilon}  &  \rightarrow0\ \mbox{strongly in}\ {L^{2}%
(0,T;L^{2}(\Omega))},\\
\epsilon\,u_{\epsilon}\Delta\rho_{\epsilon}  &  \rightarrow
0\ \mbox{weakly in}\ {L^{1}(0,T;W^{-1,\frac{3}{2}}(\Omega))}.
\end{align*}

Working as in the end of step 3, one can prove that $\beta_{1}=\rho
u$ and $\beta_{2}=\rho uu$. Moreover, from
above convergences, in analogous way as we did in step 5, we can
pass to the limit in the regularized problem \eqref{prev1}. The only
difference here is that the two terms involving the parameter
$\epsilon$ vanish.

Thenceforth, we obtain that equation $\partial_{t}\rho+\mbox{div}\ (\rho u)=0$ is verified in
$\mathcal{D}^{\prime}(Q).$ From \eqref{j013c} we have that $\rho_{0,\epsilon}=\rho_{\epsilon
}(0)\rightarrow\rho(0)$ in $W^{-1,p}(\Omega).$ However, as $\rho_{0,\epsilon
}\rightarrow\rho_{0}$ in $L^{p}(\Omega),$ when $6\leq p<\infty,$ and
$\rho_{0,\epsilon}\rightharpoonup\rho_{0}$ weakly-$\ast$ in $L^{\infty}
(\Omega),$ when $p=\infty,$ then $\rho(0)=\rho_{0}$ at least in $W^{-1,p}
(\Omega).$

Finally, it remains to verify the energy inequality (\ref{energyineq}).
As usual, integrating \eqref{e2aaaa} over $(0,t),$ multiplying the result by $\phi
\in\mathcal{D}(0,T),\ \phi\geq 0,$ integrating over $(0,T),$ and passing to the limit using convergences \eqref{j012}-\eqref{j017b}, we arrive at the following inequality
\begin{align*}
&  \int_{0}^{T}\left(  \frac{1}{2}\Vert\rho_{\epsilon}|u_{\epsilon
}|^2(t)\Vert_{1}+\int_{0}^{t}[2\nu\Vert Du_{\epsilon}(s)\Vert_{2}^{2}
+2\nu\alpha\int_{\partial\Omega}|u_{\epsilon}|^{2}]ds\right)  \phi
(t)dt\nonumber\\
&  \leq {\frac{1}{2}}\int_{\Omega}\frac{|v_{0,\epsilon}|^{2}}{\rho_{0,\epsilon}}dx\int_{0}
^{T}\phi(t)dt+\int_{0}^{T}\left(  \int_{0}^{t}\int_{\Omega}\rho_{\epsilon
}f\cdot u_{\epsilon}\right)  \phi(t)dt.
\end{align*}
As $\epsilon\rightarrow0,$ we have $\int_{\Omega}\frac{|v_{0,\epsilon}|^{2}
}{\rho_{0,\epsilon}}dx\rightarrow\int_{\Omega}\frac{|v_{0}|^{2}}{\rho_{0}
}dx\leq C.$ From convergences (\ref{j012c})-(\ref{j017bc}), as $\epsilon
\rightarrow0$ we find
\begin{align*}
&  \int_{0}^{T}\left(  \frac{1}{2}\Vert \rho|u|^2(t)\Vert_{1}+\int
_{0}^{t}[2\nu\Vert Du(s)\Vert_{2}^{2}+2\nu\alpha\int_{\partial\Omega}
|u|^{2}]ds\right)  \phi(t)dt\\ &  \leq {\frac{1}{2}}\int_{\Omega}\frac{|v_{0}|^{2}}{\rho_{0}}dx\int_{0}^{T}\phi(t)dt
+\int_{0}^{T}\left(  \int_{0}^{t}\int_{\Omega}\rho f\cdot u\right)  \phi(t)
dt,
\end{align*}
for any $\phi\in\mathcal{D}(0,T),\ \phi\geq0.$ This yields the energy
inequality (\ref{energyineq}) which implies in particular that $\sqrt{\rho} u \in L^\infty(0,T;L^2(\Omega)).$
\end{proof}

\vspace{.2cm}

\begin{remark}
Observe that, from the regularity of $\rho$ and $\rho u$, equation
$\partial_{t}\rho+\mbox{div}\ (\rho u)=0$ holds in
\[
W^{-1,\infty}(0,T;L^{p}(\Omega))\cap L^{\infty}(0,T;W^{-1,\frac{2p}{p+1}%
}(\Omega)).
\]

\end{remark}

\begin{remark}
Concerning the pressure, observe that
\begin{align*}
\rho u  &  \in L^{\infty}(0,T;L^{\frac{2p}{p+1}}(\Omega))\Rightarrow
\partial_{t}(\rho u)\in W^{-1,\infty}(0,T;L^{\frac{2p}{p+1}}(\Omega)),\\
\rho uu  &  \in L^{\varsigma/2}(0,T;L^{2}(\Omega))\Rightarrow\mbox{div}(\rho
uu)\in L^{\varsigma/2}(0,T;W^{-1,2}(\Omega)),\\
u  &  \in L^{2}(0,T;H_{\sigma}^{1}(\Omega))\Rightarrow\Delta u\in
L^{2}(0,T;W^{-1,2}(\Omega)),\\
\rho f  &  \in L^{1}(0,T;L^{\frac{2p}{p+1}}(\Omega))\Rightarrow\rho f\in
L^{1}(0,T;W^{-1,\frac{2p}{p+1}}(\Omega)).
\end{align*}
From the \textit{De Rham theorem} there exists a distribution $\pi\in
W^{-1,\infty}(0,T;L^{\frac{2p}{p+1}}(\Omega))$ such that
\[
\partial_{t}(\rho u)+\mbox{div}(\rho uu)-\nu\Delta u+\nabla\pi=\rho
f\ \mbox{in}\ W^{-1,\infty}(0,T;W^{-1,\frac{2p}{p+1}}(\Omega)).
\]

\end{remark}

\section{Vanishing viscosity limit}

\label{inviscid}

In this section we establish the convergence of a weak solution of the
non-homogeneous Navier-Stokes equations with Navier boundary conditions to the
strong solution of non-homogeneous Euler equations when the viscosity
coefficient goes to zero. To this end, consider the following limiting
problem
\begin{equation}
\left\{
\begin{array}
[c]{rcl}%
\partial_{t}(\rho u)+\mbox{div}\ (\rho uu)+\nabla\pi=\rho f\  &
\mbox{in}\ Q, & \\
\mbox{div}\ u=0\  & \mbox{in}\ Q, & \\
\partial_{t}\rho+\mbox{div}\ (\rho u)=0\  & \mbox{in}\ Q, & \\
u\cdot n=0\  & \mbox{on}\ \Sigma, & \\
\rho(0)=\rho_{0}\  & \mbox{in}\ \Omega, & \\
u(0)=u_{0}\  & \mbox{in}\ \Omega. &
\end{array}
\right.  \label{eulernonhom}%
\end{equation}

We recall a result of existence of strong solutions to the nonhomogeneous
Euler system \eqref{eulernonhom}. For another results on this subject see
\cite{itoh-tani,beirao, marsden}.

\begin{theorem}
\label{euler1}(see \cite{valli88}) Let $\Omega$ with boundary $\partial\Omega$
smooth enough, and $p>3$. Assume that $\rho_{0}\in W^{2,p}(\Omega)$,
$0<\rho_{\ast}\leq\rho_{0}\leq\rho^{\ast}<\infty$, for some constants
$\rho_{\ast},\rho^{\ast};$ moreover, assume that $u_{0}\in W^{2,p}(\Omega)$,
$u_{0}\cdot n=0$ on $\partial\Omega$, $div(u_{0})=0$, $f\in L^{1}%
(0,T;W^{2,p}(\Omega))$. Then there exists a time $T_{\ast}\in(0,T]$ such that
problem \eqref{eulernonhom} has a unique solution $(\rho,u,\pi)$ which satisfies

\begin{itemize}
\item[a)] $\rho\in C([0,T_{*}];W^{2,p}(\Omega)) \cap C^{1}([0, T_{*}%
];W^{1,p}(\Omega))$, $0<\rho_{*}\leq\rho\leq\rho^{*}<\infty$,

\item[b)] $u\in C([0, T_{*}];W^{2,p}(\Omega)) \cap W^{1,1}(0, T_{*}%
;W^{1,p}(\Omega))$,

\item[c)] $\pi\in L^{1}(0, T_{*};W^{3,p}(\Omega))$.
\end{itemize}
Moreover, if $f\in C([0, T];W^{2,p}(\Omega))$ then $u\in C^{1}([0,
T_{*}];W^{1,p}(\Omega))$ and $\pi\in C([0, T_{*}];W^{3,p}(\Omega))$, hence
$(\rho, u,\pi)$ is a classical solution.
\end{theorem}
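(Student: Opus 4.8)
The plan is to prove local existence by a linearization/iteration scheme combined with high-order transport and elliptic estimates, exploiting that $p>3$ forces the Sobolev embedding $W^{2,p}(\Omega)\hookrightarrow C^{1}(\overline{\Omega})$ in dimension three. The first step is to rewrite the momentum equation in non-conservative form: using the continuity equation $\partial_t\rho+u\cdot\nabla\rho=0$ together with $\mathrm{div}\,u=0$, the system \eqref{eulernonhom} is equivalent to $\rho(\partial_t u+u\cdot\nabla u)+\nabla\pi=\rho f$, $\mathrm{div}\,u=0$, $\partial_t\rho+u\cdot\nabla\rho=0$. Taking the divergence of the momentum equation and using $\mathrm{div}\,u=0$ shows that the pressure solves the variable-coefficient elliptic problem $\mathrm{div}(\rho^{-1}\nabla\pi)=\mathrm{div}(f-u\cdot\nabla u)$ in $\Omega$, while dotting the momentum equation with $n$ on $\partial\Omega$ and using $\partial_t(u\cdot n)=0$ yields the Neumann condition $\rho^{-1}\partial_n\pi=(f-u\cdot\nabla u)\cdot n$ on $\partial\Omega$. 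The solvability (zero-mean compatibility) of this Neumann problem follows automatically from the divergence structure of the data, since $\int_\Omega\mathrm{div}(f-u\cdot\nabla u)=\int_{\partial\Omega}(f-u\cdot\nabla u)\cdot n$.

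Next I would set up the iteration. Given $u^{k}$, define $\rho^{k+1}$ as the solution of the linear transport equation $\partial_t\rho+u^{k}\cdot\nabla\rho=0$, $\rho(0)=\rho_0$; since $u^{k}$ is divergence free, this preserves the bounds $\rho_*\le\rho^{k+1}\le\rho^*$ and, by transport estimates along the flow of $u^{k}$, propagates the $W^{2,p}$ regularity with a bound controlled by $\exp\bigl(\int_0^t\|\nabla u^{k}\|_{L^\infty}\,ds\bigr)$. I would then define $\pi^{k+1}$ by solving the Neumann problem above with coefficient $(\rho^{k+1})^{-1}$ and data built from $u^{k}$; elliptic regularity for the variable-coefficient operator gives $\pi^{k+1}\in W^{3,p}$, with norm controlled by $\|u^{k}\cdot\nabla u^{k}\|_{W^{1,p}}$, $\|f\|_{W^{2,p}}$ and $\|\rho^{k+1}\|_{W^{2,p}}$. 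Finally, define $u^{k+1}$ by the linear transport equation $\partial_t u+u^{k}\cdot\nabla u=f-(\rho^{k+1})^{-1}\nabla\pi^{k+1}$, $u(0)=u_0$, which again propagates $W^{2,p}$ regularity.

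The central task is then to close a uniform a priori estimate on a time interval $[0,T_*]$ independent of $k$: combining the transport bounds for $\rho^{k+1}$ and $u^{k+1}$ with the elliptic bound for $\pi^{k+1}$, one obtains a differential inequality of the schematic form $\frac{d}{dt}\mathcal{E}\le C(\mathcal{E})$ for $\mathcal{E}=\|\rho\|_{W^{2,p}}+\|u\|_{W^{2,p}}$, whose maximal solution exists up to some $T_*$ determined only by the initial norms and $\|f\|_{L^1(0,T;W^{2,p})}$. This yields a $k$-independent bound for $\rho^{k}$ and $u^{k}$ in $C([0,T_*];W^{2,p})$. To pass to the limit I would show the scheme contracts in a lower-order norm, estimating $\rho^{k+1}-\rho^{k}$ and $u^{k+1}-u^{k}$ in $L^{p}$ (or $W^{1,p}$) by a Gronwall argument that avoids losing derivatives on the pressure difference, and then combine strong convergence at low order with weak-$*$ compactness in $W^{2,p}$ to identify the limit as a solution. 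Uniqueness follows from the same difference estimate at the lowest order.

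The hard part will be the pressure. The elliptic problem carries the $W^{2,p}$ coefficient $\rho^{-1}$, so controlling $\pi$ in $W^{3,p}$ requires both sharp variable-coefficient elliptic regularity and careful estimation of the quadratic term $u\cdot\nabla u$ in $W^{1,p}$ via the algebra and embedding properties valid for $p>3$. The same derivative loss makes the contraction step delicate: one cannot contract in $W^{2,p}$ directly, so the scheme must be closed at top order through uniform bounds and only at low order through contraction. Strong time-continuity into $W^{2,p}$ must finally be recovered from the transport structure by a Bona--Smith type argument, using that $\rho$ and $u$ solve transport equations driven by $C^{1}$ velocity fields. Verifying the Neumann compatibility condition at each iterate and ensuring that $T_*$ is genuinely $k$-independent are the remaining technical points.
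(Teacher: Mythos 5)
The paper does not prove this theorem at all: it is quoted from Valli--Zajaczkowski \cite{valli88} and used as a black box, so there is no internal proof to compare against. Judged on its own merits, your plan follows the standard route for nonhomogeneous incompressible Euler (non-conservative reformulation, transport estimates for $\rho$ and $u$ in $W^{2,p}$ with $p>3$, a variable-coefficient Neumann problem giving $\pi\in W^{3,p}$, uniform bounds on a $k$-independent interval, contraction at low order), and most of the ingredients you list are the right ones.

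There is, however, a genuine gap in the way you set up the iteration: because you pre-compute $\pi^{k+1}$ from $u^{k}$ and then let $u^{k+1}$ solve a free transport equation, the constraints are not propagated. Taking the divergence of your equation for $u^{k+1}$ gives $\partial_{t}(\mathrm{div}\,u^{k+1})+u^{k}\cdot\nabla(\mathrm{div}\,u^{k+1})=\nabla u^{k}:(\nabla u^{k})^{T}-\nabla u^{k}:(\nabla u^{k+1})^{T}$, which does not vanish unless $u^{k+1}=u^{k}$; worse, taking the normal trace shows that $g=u^{k+1}\cdot n$ solves on $\Sigma$ a forced equation of the form $\partial_{t}g+u^{k}\cdot\nabla_{\tan}g=(u^{k+1}-u^{k})\cdot(\nabla n)u^{k}$ with $g(0)=0$, so tangency is lost after one step. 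This is fatal twice over. First, the next transport steps (for $\rho^{k+2}$, $u^{k+2}$) are driven by a field $u^{k+1}$ that is no longer tangent to $\partial\Omega$, so characteristics cross the boundary and the pure initial-value transport problems you rely on are not well posed without inflow data. Second, your $W^{3,p}$ pressure estimate secretly needs tangency: the Neumann datum $(u\cdot\nabla u)\cdot n$ has only $W^{1-1/p,p}(\partial\Omega)$ regularity in general, and one recovers the needed $W^{2-1/p,p}$ regularity only by rewriting it as $-u\cdot(\nabla n)u$ using $u\cdot n=0$ on $\partial\Omega$; once the iterates are not tangent this identity fails and a derivative is lost exactly where you cannot afford it. The standard repair --- and what the cited literature does --- is to enforce the constraints inside the linearized problem: given $(\rho^{k},u^{k})$, solve the linear system $\rho^{k}(\partial_{t}u^{k+1}+u^{k}\cdot\nabla u^{k+1})+\nabla\pi^{k+1}=\rho^{k}f$, $\mathrm{div}\,u^{k+1}=0$, $u^{k+1}\cdot n=0$, with $\pi^{k+1}$ appearing as the Lagrange multiplier of this constrained linear problem rather than as a field computed beforehand from $u^{k}$. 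With that modification your transport, elliptic, uniform-bound and low-order contraction steps can be carried out as you describe.
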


\begin{remark}
\label{regeuler} Observe that, since $p > 3,$ by the Sobolev embedding we have
that $\nabla\rho\in L^{\infty}(\Omega\times(0,T_{*})),\ \nabla u \in
L^{\infty}(\Omega\times(0,T_{*})),\ u_{t} \in L^{1}(0,T_{*};L^{\infty}%
(\Omega)) $ and $\nabla\pi\in L^{1}(0,T_{*};L^{\infty}(\Omega)) $.
\end{remark}

Let $(u^{\nu},\rho^{\nu})$ be a weak solution of
\begin{equation}
\left\{
\begin{array}
[c]{rcl}%
\partial_{t}(\rho^{\nu}u^{\nu})+\mbox{div}\ (\rho^{\nu}u^{\nu}u^{\nu}%
)-\nu\Delta u^{\nu}+\nabla\pi^{\nu}=\rho^{\nu}f^{\nu}\  & \mbox{in}\ Q, & \\
\mbox{div}\ u^{\nu}=0\  & \mbox{in}\ Q, & \\
\partial_{t}\rho^{\nu}+\mbox{div}\ (\rho^{\nu}u^{\nu})=0\  & \mbox{in}\ Q, &
\\
u^{\nu}\cdot n=0\  & \mbox{on}\ \Sigma, & \\
\left[  D(u^{\nu})n+\alpha u^{\nu}\right]  _{tan}=0\  & \mbox{on}\ \Sigma, &
\\
\rho^{\nu}(0)=\rho_{0}^{\nu}\  & \mbox{in}\ \Omega, & \\
(\rho^{\nu}u^{\nu})(0)=v_{0}^{\nu}\  & \mbox{in}\ \Omega, &
\end{array}
\right.  \label{aux-v}%
\end{equation}
with $0<\rho_{0}^{\nu}\in L^{p}(\Omega)$ given by Theorem \ref{teorem1}. This
weak solution satisfies
\begin{multline}
\frac{1}{2}\Vert\sqrt{\rho^{\nu}(t)}u^{\nu}(t)\Vert_{2}^{2}+2\nu\alpha\int
_{0}^{t}\int_{\partial\Omega}|u^{\nu}|^{2}+2\nu\int_{0}^{t}\Vert D(u^{\nu
})\Vert_{2}^{2}\label{estenergiau}\\
\leq\frac{1}{2}\Bigl\Vert\frac{v_{0}^{\nu}}{\sqrt{\rho_{0}^{\nu}}}\Bigr
\Vert_{2}^{2}+\int_{0}^{t}\int_{\Omega}\rho^{\nu}f^{\nu}\cdot u^{\nu},
\end{multline}
and
\begin{equation}
\frac{1}{2}\Vert\rho^{\nu}(t)\Vert_{2}^{2}\leq\frac{1}{2}\Vert\rho_{0}^{\nu
}\Vert_{2}^{2}. \label{estenergiarho}
\end{equation}

\vspace{.2cm}

We now state the main result of this section.

\begin{theorem}
\label{limitinv} Under the hypotheses of Theorems \ref{euler1} and
\ref{teorem1}. Let $(u,\rho)$ be the solution of (\ref{eulernonhom}) obtained
in Theorem \ref{euler1}, and let $(u^{\nu},\rho^{\nu})$ be the one of
(\ref{aux-v}) given in Theorem \ref{teorem1}. Assume further that
$0<\rho_{\ast}\leq\rho_{0}^{\nu}\leq\rho^{\ast},$ for all $\nu>0,$ where
$\rho_{\ast}$ and $\rho^{\ast}$ are the same constants of Theorem
\ref{euler1}. Then there exists $C>0$ independent of $\nu>0$ such that, for
all $t\in(0,T_{\ast}],$ the following inequality holds:
\begin{multline*}
\Vert u(t)-u^{\nu}(t)\Vert_{2}^{2}+\Vert\rho(t)-\rho^{\nu}(t)\Vert_{2}^{2}\\
\leq C\Bigl(\Bigl\Vert\sqrt{\rho_{0}^{\nu}}u_{0}-\frac{v_{0}^{\nu}}{\sqrt
{\rho_{0}^{\nu}}}\Bigr\Vert_{2}^{2}+\Vert\rho_{0}-\rho_{0}^{\nu}\Vert_{2}%
^{2}+\nu\int_{0}^{T_{\ast}}\Vert u\Vert_{H^{1}(\Omega)}^{2}+\int_{0}^{T_{\ast
}}\Vert f-f^{\nu}\Vert_{\frac{2p}{p-1}}\Bigr).
\end{multline*}
In particular, if $\Bigl\Vert\sqrt{\rho_{0}^{\nu}}u_{0}-\frac{v_{0}^{\nu}%
}{\sqrt{\rho_{0}^{\nu}}}\Bigr\Vert_{2}^{2}+\Vert\rho_{0}-\rho_{0}^{\nu}%
\Vert_{2}^{2}\rightarrow0$ as $\nu\rightarrow0,$ and $f^{\nu}$ converges to
$f$ in $L^{1}(0,T_{\ast};L^{\frac{2p}{p-1}}(\Omega)),$ then
\begin{equation}
\label{convenergia}\sup_{0<s<t}(\Vert u(s)-u^{\nu}(s)\Vert_{2}+\Vert
\rho(s)-\rho^{\nu}(s)\Vert_{2})\rightarrow0\text{ as }\nu\rightarrow0.
\end{equation}

\end{theorem}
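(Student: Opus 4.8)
The plan is to run a relative-energy (weak--strong) argument comparing the weak Navier--Stokes solution $(u^{\nu},\rho^{\nu})$ with the strong Euler solution $(u,\rho)$ through the functional
\[
\Psi(t)=\frac{1}{2}\int_{\Omega}\rho^{\nu}|u^{\nu}-u|^{2}\,dx+\frac{1}{2}\Vert\rho^{\nu}-\rho\Vert_{2}^{2}.
\]
First I would record that, since $u^{\nu}$ is divergence free, the continuity equation is a pure transport, so the maximum principle gives $\rho_{\ast}\le\rho^{\nu}\le\rho^{\ast}$ a.e.; in particular $\rho^{\nu}$ is bounded above and below and $u^{\nu}\in L^{\infty}(0,T_\ast;L^{2})$. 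Expanding the velocity part as $\tfrac12\int\rho^{\nu}|u^{\nu}|^{2}-\int\rho^{\nu}u^{\nu}\cdot u+\tfrac12\int\rho^{\nu}|u|^{2}$, I would estimate the three pieces separately: the energy inequality \eqref{estenergiau} controls $\tfrac12\Vert\sqrt{\rho^{\nu}}u^{\nu}\Vert_{2}^{2}$; the weak momentum formulation \eqref{weak1} tested with $\varphi=u$ (legitimate since, by Theorem \ref{euler1}, $u\in C([0,T_\ast];W^{2,p})\cap W^{1,1}(0,T_\ast;W^{1,p})$ is divergence free and tangent to the boundary) yields an identity for $\int\rho^{\nu}u^{\nu}\cdot u$; and testing the continuity equation $\partial_{t}\rho^{\nu}+\mathrm{div}(\rho^{\nu}u^{\nu})=0$ with $\tfrac12|u|^{2}$ produces $\tfrac12\int\rho^{\nu}|u|^{2}$.

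Adding these three relations, the initial contributions assemble exactly into $\tfrac12\Vert\sqrt{\rho_0^{\nu}}u_0-\tfrac{v_0^{\nu}}{\sqrt{\rho_0^{\nu}}}\Vert_2^2$. The four viscous terms combine into $-2\nu\int_0^t\Vert D(u^{\nu})-D(u)\Vert_2^2-2\nu\alpha\int_0^t\int_{\partial\Omega}|u^{\nu}-u|^2$ (both $\le 0$, to be discarded) plus cross terms $2\nu\int_0^t\int D(u):(D(u^{\nu})-D(u))$ and its boundary analogue, which Young's inequality absorbs into the discarded dissipation at the cost of $C\nu\int_0^{T_\ast}\Vert u\Vert_{H^1}^2$. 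The convective and time-derivative terms combine into $-\int_0^t\int\rho^{\nu}(u^{\nu}-u)\cdot\big(\partial_t u+(u^{\nu}\cdot\nabla)u\big)$; splitting $u^{\nu}\cdot\nabla=(u^{\nu}-u)\cdot\nabla+u\cdot\nabla$, the first part is bounded by $2\Vert\nabla u\Vert_{\infty}\int\rho^{\nu}|u^{\nu}-u|^2$ (a Gr\"onwall term), while for the second I use the Euler relation $\partial_t u+(u\cdot\nabla)u=f-\tfrac1\rho\nabla\pi$. Combining the resulting force contribution with the force term from the energy relations gives $\int_0^t\int\rho^{\nu}(f^{\nu}-f)\cdot(u^{\nu}-u)$, controlled by $C\int_0^t\Vert f^{\nu}-f\Vert_{2p/(p-1)}$ via H\"older (using $u^{\nu}-u\in L^{\infty}(0,T_\ast;L^2)\hookrightarrow L^{\infty}(0,T_\ast;L^{2p/(p+1)})$ and $\rho^{\nu}\le\rho^{\ast}$), plus the pressure contribution $\int_0^t\int\tfrac{\rho^{\nu}}{\rho}(u^{\nu}-u)\cdot\nabla\pi$. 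Writing $\tfrac{\rho^{\nu}}{\rho}=1+\tfrac{\rho^{\nu}-\rho}{\rho}$, the leading piece $\int(u^{\nu}-u)\cdot\nabla\pi$ vanishes because $u^{\nu}-u$ is divergence free and tangent to $\partial\Omega$, leaving $\int\tfrac{\rho^{\nu}-\rho}{\rho}(u^{\nu}-u)\cdot\nabla\pi$, bounded by $C\Vert\nabla\pi\Vert_{\infty}\big(\Vert\rho^{\nu}-\rho\Vert_2^2+\Vert u^{\nu}-u\Vert_2^2\big)$, again a Gr\"onwall term since $\nabla\pi\in L^1(0,T_\ast;L^{\infty})$ by Remark \ref{regeuler}.

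For the density part, rather than differentiating $\rho^{\nu}$ (whose gradient is not controlled), I would write both transport equations in non-conservative form, $\partial_t\rho^{\nu}+u^{\nu}\cdot\nabla\rho^{\nu}=0$ and $\partial_t\rho+u\cdot\nabla\rho=0$ (valid since both velocities are divergence free), so that $\sigma:=\rho^{\nu}-\rho$ solves $\partial_t\sigma+u^{\nu}\cdot\nabla\sigma+(u^{\nu}-u)\cdot\nabla\rho=0$. Multiplying by $\sigma$ and integrating, the transport term $\int\sigma\,u^{\nu}\cdot\nabla\sigma$ vanishes ($\mathrm{div}\,u^{\nu}=0$, $u^{\nu}\cdot n=0$), whence $\tfrac12\frac{d}{dt}\Vert\sigma\Vert_2^2=-\int\sigma(u^{\nu}-u)\cdot\nabla\rho$, bounded by $\Vert\nabla\rho\Vert_{\infty}\big(\Vert\sigma\Vert_2^2+\Vert u^{\nu}-u\Vert_2^2\big)\le C\Psi$ since $\nabla\rho\in L^{\infty}(Q)$. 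These manipulations are justified for the weak solution by the DiPerna--Lions renormalization theory for transport with $L^2_tH^1_x$ divergence-free drift (cf.\ \cite{lionsb}), which is also what legitimizes testing the continuity equation with $\tfrac12|u|^2$ in the velocity part.

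Collecting everything and using $\rho^{\nu}\ge\rho_\ast$ to bound $\Vert u^{\nu}-u\Vert_2^2\le\tfrac{2}{\rho_\ast}\int\rho^{\nu}|u^{\nu}-u|^2$, I arrive at
\[
\Psi(t)\le \Psi(0)+C\nu\!\int_0^{T_\ast}\!\Vert u\Vert_{H^1}^2+C\!\int_0^{T_\ast}\!\Vert f^{\nu}-f\Vert_{2p/(p-1)}+C\!\int_0^t\!\big(1+\Vert\nabla\pi(s)\Vert_{\infty}\big)\Psi(s)\,ds,
\]
where $\Psi(0)=\tfrac12\Vert\sqrt{\rho_0^{\nu}}u_0-\tfrac{v_0^{\nu}}{\sqrt{\rho_0^{\nu}}}\Vert_2^2+\tfrac12\Vert\rho_0-\rho_0^{\nu}\Vert_2^2$. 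Since $s\mapsto 1+\Vert\nabla\pi(s)\Vert_{\infty}$ lies in $L^1(0,T_\ast)$, Gr\"onwall's inequality closes the estimate with a constant $C=C(\rho_\ast,\rho^{\ast},T_\ast,\Vert\nabla u\Vert_{\infty},\Vert\nabla\rho\Vert_{\infty},\Vert\nabla\pi\Vert_{L^1_tL^{\infty}})$ independent of $\nu$, yielding the asserted inequality; the convergence \eqref{convenergia} is then immediate. I expect the main obstacle to be the rigorous justification of the two test-function steps for the weak solution---using the smooth $u$ in \eqref{weak1} together with the $\tfrac12|u|^2$-identity for the continuity equation---and the delicate cancellation in the pressure term, which works only because the density stays bounded above and below and $u^{\nu}-u$ is exactly divergence free and tangent to the boundary.
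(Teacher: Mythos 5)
Your argument is correct and is, at its core, the same weak--strong (relative-energy) comparison that the paper runs: your functional $\Psi$ is exactly $\tfrac12\Vert\sqrt{\rho^{\nu}}\omega\Vert_{2}^{2}+\tfrac12\Vert\sigma\Vert_{2}^{2}$ from the paper's proof, your three test-function identities are the paper's combinations $\eqref{estenergiau}+I_{1}-I_{2}-I_{3}$ and $\eqref{estenergiarho}+J_{1}-J_{2}-J_{3}$, your density estimate coincides with \eqref{presigma}--\eqref{estsigma}, and the viscous terms are absorbed the same way (your Young's inequality versus the paper's completion of squares $|w+\tfrac{u}{2}|^{2}$, $|D(w+\tfrac{u}{2})|^{2}$ borrowed from \cite{Iftimie-Planas}; both yield $C\nu\int_{0}^{T_{\ast}}\Vert u\Vert_{H^{1}(\Omega)}^{2}$). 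The one substantive difference is the treatment of the Euler material derivative: the paper keeps $\sigma(u_{t}+u\cdot\nabla u)\cdot\omega$ in the difference system \eqref{difference2} and bounds it by $(\Vert u_{t}\Vert_{\infty}+\Vert u\cdot\nabla u\Vert_{\infty})\Vert\sigma\Vert_{2}\Vert\omega\Vert_{2}$, so the Euler pressure never appears (only the difference pressure, which vanishes against the divergence-free $\omega$); you substitute $u_{t}+u\cdot\nabla u=f-\rho^{-1}\nabla\pi$, which obliges you to verify the cancellation $\int_{\Omega}(u^{\nu}-u)\cdot\nabla\pi\,dx=0$ and to run Gronwall with $\Vert\nabla\pi\Vert_{\infty}$ in place of $\Vert u_{t}\Vert_{\infty}+\Vert u\cdot\nabla u\Vert_{\infty}+\Vert f\Vert_{\infty}$. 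By the Euler equation the two estimated quantities coincide, and Remark \ref{regeuler} supplies the required norms either way, so this is an equivalent route at the cost of one extra (harmless) integration by parts. One caveat: your bound on the force term uses a $\nu$-uniform bound on $\Vert u^{\nu}-u\Vert_{L^{\infty}(0,T_{\ast};L^{2}(\Omega))}$, which requires $\Vert v_{0}^{\nu}/\sqrt{\rho_{0}^{\nu}}\Vert_{2}$ and $\Vert f^{\nu}\Vert_{L^{1}(0,T_{\ast};L^{2p/(p-1)}(\Omega))}$ to be bounded uniformly in $\nu$ --- true under the convergence hypotheses of the second assertion, but not assumed in the first; the paper sidesteps this by keeping $\int_{0}^{t}\Vert f-f^{\nu}\Vert_{2p/(p-1)}\Vert\omega\Vert_{2}$ and closing with the nonlinear Gronwall-type inequality of \cite[p.360]{mitri}, which is the more robust choice. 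Finally, your appeal to DiPerna--Lions renormalization \cite{lionsb} to justify the transport manipulations and the bounds $\rho_{\ast}\leq\rho^{\nu}\leq\rho^{\ast}$ plays the role of the paper's combination-of-inequalities device, and is at the same level of rigor.
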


\vspace{.2cm}

\begin{remark}
We stress that \eqref{convenergia} implies in particular that
\[
\sup_{0<s<t}\Vert\sqrt{\rho} u- \sqrt{\rho^{\nu}} u^{\nu}\Vert_{2}%
\rightarrow0\text{ as }\nu\rightarrow0,
\]
since we can write
\[
\sqrt{\rho} u- \sqrt{\rho^{\nu}} u^{\nu} = (\sqrt{\rho} - \sqrt{\rho^{\nu}})u +
\sqrt{\rho^{\nu}}(u - u^{\nu}) .
\]
Moreover, by interpolation, we conclude that, for $p \geq2, $
\[
\sup_{0<s<t} \Vert\rho-\rho^{\nu}\Vert_{p}\rightarrow0\text{ as }%
\nu\rightarrow0.
\]
\end{remark}

Let us proceed with the proof of Theorem \ref{limitinv}.

\begin{proof}
The differences $\omega=u-u^{\nu},\ \sigma=\rho-\rho^{\nu},\ q=\pi-\pi^{\nu}$
satisfy

\begin{equation}
\left\{
\begin{array}
[c]{rcl}%
\rho^{\nu}[\omega_{t}+u^{\nu}\cdot\nabla\omega]+\nabla q &  & \\
=-\rho^{\nu}\omega\cdot\nabla u-\sigma(u_{t}+u\cdot\nabla u)-\nu\Delta u^{\nu
}+\sigma f+\rho^{\nu}(f-f^{\nu})\  & \mbox{in}\ Q, & \\
\mbox{div}\ \omega=0\  & \mbox{in}\ Q, & \\
\partial_{t}\sigma+u^{\nu}\cdot\nabla\sigma=-\omega\cdot\nabla\rho\  &
\mbox{in}\ Q, & \\
\omega\cdot n=0\  & \mbox{on}\ \Sigma, & \\
\sigma(0)=\rho_{0}-\rho_{0}^{\nu}\  & \mbox{in}\ \Omega, & \\
\omega(0)=u_{0}-u_{0}^{\nu}\  & \mbox{in}\ \Omega. &
\end{array}
\right.  \label{difference2}
\end{equation}
Formally, by multiplying third equation in (\ref{difference2}) by $\sigma$ and
integrating in space and time we obtain

\begin{equation}
\frac{1}{2}\Vert\sigma(t)\Vert_{2}^{2}\leq\frac{1}{2}\Vert\sigma(0)\Vert
_{2}^{2}-\int_{0}^{t}\int_{\Omega}\omega\cdot\nabla\rho\,\sigma,
\label{presigma}%
\end{equation}
where we have used that $div(u^{\nu})=0$ and $u^{\nu}\cdot n=0$ on
$\partial\Omega$ to deduce that $\int_{0}^{t}\int_{\Omega}\left(  u^{\nu}%
\cdot\nabla\sigma\right)  \sigma=0$. In fact, to justify the previous
inequality we proceed in this way: consider the energy inequality
\eqref{estenergiarho} and obtain other three inequalities: $J_{1}$ by
multiplying the continuity equation \eqref{eulernonhom}$_{3}$ for $\rho$ by
$\rho$, $J_{2}$ by multiplying the same equation \eqref{eulernonhom}$_{3}$ by
$\rho^{\nu}$ and $J_{3}$ by multiplying the equation \eqref{aux-v}$_{3}$ for
$\rho^{\nu}$ by $\rho$. One obtains \eqref{presigma} by doing
$\eqref{estenergiarho}+J_{1}-J_{2}-J_{3}$.

From \eqref{presigma}, by using the H\"older and Young inequalities
we arrive at
\begin{equation}
\label{estsigma}\|\sigma(t) \|_{2}^{2} \leq\|\sigma(0) \|_{2}^{2} + \int
_{0}^{t} \|\nabla\rho\|_{\infty} (\|\omega\|_{2}^{2} +\|\sigma\|_{2}^{2}).
\end{equation}

Formally, by multiplying the first equation in (\ref{difference2}) by $\omega
$, and integrating in space and time, and finally by using integration by
parts (Lemma \ref{part}) in the Laplacian term, one gets
\begin{multline}
\frac{1}{2}\Vert\sqrt{\rho^{\nu}(t)}\omega(t)\Vert_{2}^{2}+2\nu\alpha\int
_{0}^{t}\int_{\partial\Omega}u^{\nu}\cdot w+2\nu\int_{0}^{t}\int_{\Omega
}D(u^{\nu})\cdot D(w)\label{energyw}\\
\leq\frac{1}{2}\Bigl\Vert\sqrt{\rho_{0}^{\nu}}u_{0}-\frac{v_{0}^{\nu}}{\sqrt
{\rho_{0}^{\nu}}}\Bigr\Vert_{2}^{2}-\int_{0}^{t}\int_{\Omega}\rho^{\nu}\omega
\cdot\nabla u\cdot\omega+\int_{0}^{t}\int_{\Omega}\sigma(u_{t}+u\cdot\nabla
u)\cdot\omega\\
+\int_{0}^{t}\int_{\Omega}\sigma f\cdot\omega+\int_{0}^{t}\int_{\Omega}%
\rho^{\nu}(f-f^{\nu})\cdot\omega.
\end{multline}
To justify the previous inequality, we first obtain three inequalities:
$I_{1}$ by multiplying the momentum equation \eqref{eulernonhom}$_{1}$ by $u$,
$I_{2}$ by multiplying the same equation \eqref{eulernonhom}$_{1}$ by $u^{\nu
}$ and $I_{3}$ by multiplying the momentum equation \eqref{aux-v}$_{1}$ for
$u^{\nu}$ by $u$. Next, we consider the energy inequality \eqref{estenergiau}
and do $\eqref{estenergiau}+I_{1}-I_{2}-I_{3}$ to arrive at \eqref{energyw}.

Proceeding as in \cite{Iftimie-Planas}, we rewrite
\begin{multline*}
2\nu\alpha\int_{0}^{t}\int_{\partial\Omega}u^{\nu}\cdot w+2\nu\int_{0}^{t}
\int_{\Omega}D(u^{\nu})\cdot D(w)\\
=2\nu\alpha\int_{0}^{t}\int_{\partial\Omega}\bigl|w+\frac{u}{2}\bigr|^{2}
+2\nu\int_{0}^{t}\int_{\Omega}\bigl|D\bigl(w+\frac{u}{2}\bigr)\bigr|^{2}
-\frac{\nu\alpha}{2}\int_{0}^{t}\int_{\partial\Omega}|u|^{2}-\frac{\nu}{2}
\int_{0}^{t}\int_{\Omega}|D(u)|^{2}.
\end{multline*}
Thus, by using H\"{o}lder and Young inequalities, we have
\begin{align*}
\frac{1}{2}\Vert\sqrt{\rho^{\nu}}\omega\Vert_{2}^{2}  &  \leq\frac{1}{2}
\Bigl\Vert\sqrt{\rho_{0}^{\nu}}u_{0}-\frac{v_{0}^{\nu}}{\sqrt{\rho_{0}^{\nu}}}
\Bigr\Vert_{2}^{2}+\int_{0}^{t}\Vert\nabla u\Vert_{\infty}\Vert\sqrt{\rho^{\nu}
}\omega\Vert_{2}^{2}+\int_{0}^{t}\Vert u_{t}\Vert_{\infty}\Vert\sigma\Vert
_{2}\Vert\omega\Vert_{2}\\
&  +\int_{0}^{t}\Vert u\cdot\nabla u\Vert_{\infty}\Vert\sigma\Vert_{2}
\Vert\omega\Vert_{2}+\frac{\nu\alpha}{2}\int_{0}^{t}\int_{\partial\Omega
}|u|^{2}+\frac{\nu}{2}\int_{0}^{t}\Vert D(u)\Vert_{2}^{2}\\
&  +\int_{0}^{t}\Vert f\Vert_{\infty}\Vert\sigma\Vert_{2}\Vert\omega\Vert
_{2}+\int_{0}^{t}\Vert\sqrt{\rho^{\nu}}\Vert_{2p}\Vert f-f^{\nu}\Vert
_{\frac{2p}{p-1}}\Vert\sqrt{\rho^{\nu}}\omega\Vert_{2}\\
&  \leq\frac{1}{2}\Bigl\Vert\sqrt{\rho_{0}^{\nu}}u_{0}-\frac{v_{0}^{\nu}}
{\sqrt{\rho_{0}^{\nu}}}\Bigr\Vert_{2}^{2}+\int_{0}^{t}\Vert\nabla u\Vert_{\infty
}\Vert\sqrt{\rho^{\nu}}\omega\Vert_{2}^{2}\\
&  +\frac{1}{2}\int_{0}^{t}(\Vert u_{t}\Vert_{\infty}+\Vert u\cdot\nabla
u\Vert_{\infty}+\Vert f\Vert_{\infty})(\Vert\omega\Vert_{2}^{2}+\Vert
\sigma\Vert_{2}^{2})\\
&  +C\nu\int_{0}^{t}\Vert u\Vert_{H^{1}(\Omega)}^{2}+C\Vert\sqrt{\rho^{\nu}
}\Vert_{L^{\infty}(0,T_{\ast};L^{2p}(\Omega))}\int_{0}^{t}\Vert f-f^{\nu}
\Vert_{\frac{2p}{p-1}}\Vert\sqrt{\rho^{\nu}}\omega\Vert_{2}%
\end{align*}
where $C=C(\rho^{\ast},\alpha,\Omega)$ is independent of $\nu$ and we have
used that $\displaystyle\int_{\partial\Omega}|u|^{2}\leq C(\Omega)\Vert
u\Vert_{H^{1}(\Omega)}.$ Notice also that
\begin{equation}
\Vert\sqrt{\rho^{\nu}}\Vert_{L^{\infty}(0,T_{\ast};L^{\infty}(\Omega))}
=\Vert\rho^{\nu}\Vert_{L^{\infty}(0,T_{\ast};L^{\infty}(\Omega))}^{1/2}
\leq\Vert\rho_{0}^{\nu}\Vert_{\infty}^{1/2}\leq C(\rho^{\ast})^{1/2}.
\label{aux-est-1}%
\end{equation}
Adding \eqref{estsigma} and using (\ref{aux-est-1}), we obtain
\begin{multline*}
\sqrt{\rho_{\ast}}\Vert\omega(t)\Vert_{2}^{2}+\Vert\sigma(t)\Vert_{2}^{2}\\
\leq \Bigl \Vert\sqrt{\rho_{0}^{\nu}}u_{0}-\frac{v_{0}^{\nu}}{\sqrt{\rho_{0}^{\nu}}
}\Bigr\Vert_{2}^{2}+\Vert\sigma(0)\Vert_{2}^{2}+C\nu\int_{0}^{t}\Vert u\Vert
_{H^{1}(\Omega)}^{2}+C\int_{0}^{t}\Vert f-f^{\nu}\Vert_{\frac{2p}{p-1}}
\Vert\omega\Vert_{2}\\
+C\int_{0}^{t}(\Vert\nabla\rho\Vert_{\infty}+\Vert\nabla u\Vert_{\infty}+\Vert
u_{t}\Vert_{\infty}+\Vert u\cdot\nabla u\Vert_{\infty}+\Vert f\Vert_{\infty
})(\Vert\omega\Vert_{2}^{2}+\Vert\sigma\Vert_{2}^{2}).
\end{multline*}
Now we apply a Gronwall type inequality \cite[p.360]{mitri} to obtain
\begin{multline*}
\Vert u(t)-u^{\nu}(t)\Vert_{2}^{2}+\Vert\rho(t)-\rho^{\nu}(t)\Vert_{2}^{2}\\
\leq C\Bigl(\Bigl\Vert\sqrt{\rho_{0}^{\nu}}u_{0}-\frac{v_{0}^{\nu}}{\sqrt{\rho
_{0}^{\nu}}}\Bigr\Vert_{2}^{2}+\Vert\rho_{0}-\rho_{0}^{\nu}\Vert_{2}^{2}+\nu
\int_{0}^{T_{\ast}}\Vert u\Vert_{H^{1}(\Omega)}^{2}+\int_{0}^{T_{\ast}}\Vert
f-f^{\nu}\Vert_{\frac{2p}{p-1}}\Bigr),
\end{multline*}
where $C$ is a positive constant depending on
\begin{align*}
&  \rho_{\ast},\rho^{\ast},\alpha,\Omega,\Vert\nabla\rho\Vert_{{L^{1}%
(0,T_{\ast};L^{\infty}(\Omega))}},\ \Vert\nabla u\Vert_{L^{1}(0,T_{\ast
};L^{\infty}(\Omega))},\ \\
&  \Vert u_{t}\Vert_{L^{1}(0,T_{\ast};L^{\infty}(\Omega))},\linebreak\Vert
u\Vert_{L^{\infty}(\Omega\times(0,T_{\ast}))}\text{ and }\Vert f\Vert
_{L^{1}(0,T_{\ast};L^{\infty}(\Omega))}.
\end{align*}
Due to regularity of $\rho,u$ and $f$ in Theorem \ref{euler1} (see Remark
\ref{regeuler}), all the above norms  are
finite. The proof is then complete.
\end{proof}

\end{document}